\newtheorem{theorem}{Theorem}[section]
\newtheorem{lemma}[theorem]{Lemma}
\newtheorem{corollary}[theorem]{Corollary}
\newtheorem{question}[theorem]{Question}
\newtheorem{proposition}[theorem]{Proposition}
\newtheorem{example}[theorem]{Example}
\newtheorem{claim}[theorem]{Claim}
\numberwithin{equation}{section}
\newcommand{\CC}{C_k}
\newcommand{\NN}{\omega}
\newcommand{\UU}{\mathcal{U}}
\newcommand{\w}{\omega}
\newcommand{\V}{\mathcal{V}}
\newcommand{\KK}{\mathcal{K}}
\newcommand{\IR}{\mathbb{R}}
\newcommand{\II}{\mathbb{I}}
\newcommand{\e}{\varepsilon}
\renewcommand{\phi}{\varphi}
\newcommand{\U}{\mathcal U}
\newcommand{\W}{\mathcal W}
\newcommand{\supp}{\mathrm{supp}}
\newcommand{\Op}{\mathcal{O}\/}
\newcommand{\uhr}{\upharpoonright}
\title[The Ascoli property for function spaces]{The Ascoli property for function spaces}
\author[S. Gabriyelyan]{Saak Gabriyelyan}
\address{Department of Mathematics, Ben-Gurion University of the Negev, Beer-Sheva, P.O. 653, Israel}
\email{saak@math.bgu.ac.il}
\author[J. Greb\'{\i}k]{Jan Greb\'{\i}k}
\address{Institute of Mathematics, Czech Academy of Sciences, Czech Republic}
\email{Greboshrabos@seznam.cz}
\author[J. K\c{a}kol]{Jerzy K\c{a}kol}
\address{A. Mickiewicz University $61-614$ Pozna{\'n}, Poland and Institute of Mathematics, Czech Academy of Sciences, Czech Republic}
\email{kakol@amu.edu.pl}
\author[L. Zdomskyy]{Lyubomyr Zdomskyy}
\address{Kurt G\"odel Research Center for Mathematical Logic, University of Vienna, W\"ahringer Stra\ss e 25, A-1090 Wien, Austria.}
\email{lzdomsky@gmail.com}
\subjclass[2010]{Primary 54C35; Secondary 54D50}
\keywords{$C_p(X)$, $\CC(X)$, Ascoli, $\kappa$-Fr\'{e}chet--Urysohn, scattered, \v{C}ech-complete, stratifiable, paracompact}
\thanks{The second author was supported by the GACR project 15-34700L and RVO: 67985840. The third  author was supported by Generalitat Valenciana, Conselleria d'Educaci\'{o}, Cultura i Esport, Spain, Grant PROMETEO/2013/058 and by  the GA\v{C}R project 16-34860L and RVO: 67985840, and gratefully acknowledges also the financial support he received from the Kurt Goedel Research Center in Wien for his research visit in days 15.04-24.04 2016. The fourth author  would like to thank  the Austrian Science Fund FWF (Grant I 1209-N25) for generous support for this research. The collaboration of the second and the fourth authors was partially supported by the Czech Ministry of Education grant 7AMB15AT035 and RVO: 67985840.}
\begin{document}

\begin{abstract}

The paper deals with Ascoli spaces $C_{p}(X)$ and $\CC(X)$ over
Tychonoff
spaces $X$. The class of Ascoli spaces $X$, i.e. spaces $X$ for which any compact
subset $\KK$ of $\CC(X)$ is evenly continuous, essentially includes the class of
$k_\IR$-spaces. First we prove that if  $C_p(X)$ is Ascoli,  then it is
$\kappa$-Fr\'echet--Urysohn. If $X$ is cosmic, then $C_p(X)$ is Ascoli iff it
 is $\kappa$-Fr\'echet--Urysohn.  This leads to the following
extension of a  result
of Morishita:   If for  a \v{C}ech-complete space $X$ the space $C_p(X)$ is Ascoli,
then $X$ is scattered. If $X$ is scattered and stratifiable, then $C_p(X)$ is an Ascoli
 space. Consequently: (a) If $X$ is a complete metrizable space,
then $C_p(X)$ is Ascoli iff $X$ is scattered. (b) If $X$ is  a \v{C}ech-complete Lindel\"of space,
then $C_p(X)$ is Ascoli iff $X$ is scattered iff $C_{p}(X)$ is Fr\'echet-Urysohn. Moreover, we prove that for  a paracompact space $X$ of point-countable type the following conditions are equivalent: (i) $X$ is locally compact. (ii) $\CC(X)$ is a  $k_\IR$-space. (iii) $\CC(X)$ is an Ascoli space. The Asoli spaces     $\CC(X,\II)$ are also studied.
\end{abstract}

\maketitle



\section{Introduction }



Various topological properties generalizing metrizability have been  intensively
studied  both by topologists and analysts  for  a long time, and the following
diagram gathers some of the most important concepts:
\[
\xymatrix{
& \mbox{$\kappa$-Fr\'{e}chet--Urysohn} & & & & \\
\mbox{metric} \ar@{=>}[r] & {\mbox{Fr\'{e}chet--}\atop\mbox{Urysohn}} \ar@{=>}[r] \ar@{=>}[u] & \mbox{sequential} \ar@{=>}[r] &  \mbox{$k$-space} \ar@{=>}[r] &  \mbox{$k_\IR$-space} \ar@{=>}[r] &  {\mbox{Ascoli}\atop\mbox{space}}. }
\]
Note that none of these implications is reversible.
The study of the above concepts for the function spaces with various topologies has a rich history and is also nowadays an active area of research, see  \cite{Arhangel,kak,mcoy,Tkachuk-Book-1} and references therein.

For Tychonoff  topological spaces $X$ and $Y$, we denote by $\CC(X,Y)$ and $C_p(X,Y)$ the space $C(X,Y)$ of all continuous  functions from $X$ into $Y$ endowed with the compact-open topology or the pointwise topology, respectively. If $Y=\IR$, we shall write $\CC(X)$ and $C_p(X)$, respectively.

It is well-known that $C_p(X)$ is metrizable if and only if $X$ is countable. Pytkeev, Gerlitz and Nagy (see \S 3 of \cite{Arhangel}) characterized  spaces $X$ for which $C_p(X)$ is Fr\'{e}chet--Urysohn, sequential or a $k$-space (these properties coincide for the spaces $C_p(X)$). Sakai in \cite{Sak2} described all   spaces $X$ for which $C_p(X)$ is $\kappa$-Fr\'{e}chet--Urysohn, see Theorem \ref{t:Sakai-Cp-k-FU} below.
 However, very little is  known about spaces $X$ for which $C_p(X)$ is an Ascoli space
or a $k_\IR$-space.

Following \cite{BG}, a   space $X$ is called an {\em Ascoli space} if each compact subset $\KK$ of $\CC(X)$ is evenly continuous, that is, the map $X\times\mathcal K\ni (x,f)\mapsto f(x)\in\mathbb R $ is continuous.  Equivalently, $X$ is Ascoli if the natural evaluation map $X\hookrightarrow \CC(\CC(X))$ is an embedding, see \cite{BG}. Recall that a  space $X$ is called a {\em $k_\IR$-space} if a real-valued function $f$ on $X$ is continuous if and only if its restriction $f|_K$ to any compact subset $K$ of $X$ is continuous. It is known that every $k_\IR$-space is Ascoli, but the converse is in general not true, see \cite{BG}.

The class of Ascoli spaces  was  introduced  in \cite{BG}. The question for which  spaces  $X$ the space $C_p(X)$ is Ascoli or a $k_\IR$-space is posed in \cite{GKP}. It turned out that for spaces of the form $C_p(X)$,  the Ascoli property  is formally stronger than the  $\kappa$-Fr\'echet--Urysohn one. This follows from the following
\begin{theorem} \label{t:Cp-Ascoli-k-FU}
\begin{itemize}
 \item[{\rm (i)}] If $C_p(X)$ is Ascoli,  then it is $\kappa$-Fr\'echet--Urysohn.
 \item[{\rm (ii)}]  If $C_p(X)$ is $\kappa$-Fr\'echet--Urysohn and every compact $K\subset \CC(C_p(X))$ is first-countable, then $C_p(X)$ is Ascoli.
\end{itemize}
\end{theorem}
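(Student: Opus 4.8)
The plan is to handle both implications through a single mechanism: reduce the even continuity of a compact family to the behaviour of convergent sequences, using that a convergent sequence together with its limit is compact, so that on it the compact-open topology of $\CC(C_p(X))$ is just uniform convergence, and that $C_p(X)$ is a homogeneous topological vector space.

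\emph{For part (ii).} Put $Z=C_p(X)$ and let $\KK\subseteq\CC(Z)$ be an arbitrary compact set; by hypothesis $\KK$ is first countable, hence, being compact, sequentially compact. I would first show that it suffices to prove that $\KK$ is equicontinuous at every $f_0\in Z$. Indeed, for $\phi$ close to a given $\phi_0$ in $\KK$ one has $|\phi(f)-\phi_0(f_0)|\le|\phi(f)-\phi(f_0)|+|\phi(f_0)-\phi_0(f_0)|$, where the first summand is controlled (uniformly in $\phi\in\KK$) by equicontinuity at $f_0$ and the second by the continuity of the evaluation $\phi\mapsto\phi(f_0)$ on $\CC(Z)$; together these give joint continuity of $Z\times\KK\to\IR$ at $(f_0,\phi_0)$, i.e. even continuity. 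The point of this reduction is that it replaces the two-variable limit by a one-variable equicontinuity statement, which can be attacked with a single use of $\kappa$-Fr\'echet--Urysohnness rather than a diagonal argument.

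To prove equicontinuity at $f_0$, suppose it fails: there is $\e>0$ such that every neighbourhood of $f_0$ contains a point $f$ with $|\phi(f)-\phi(f_0)|>\e$ for some $\phi\in\KK$. Then $A=\{f\in Z:\exists\phi\in\KK,\ |\phi(f)-\phi(f_0)|>\e\}$ is open (a union of open sets, each $\phi$ being continuous) and $f_0\in\overline{A}$. Here the $\kappa$-Fr\'echet--Urysohn property of $Z$ enters, producing a sequence $f_n\to f_0$ with $f_n\in A$, and for each $n$ a witness $\phi_n\in\KK$ with $|\phi_n(f_n)-\phi_n(f_0)|>\e$. Passing to a subsequence by sequential compactness of $\KK$, we get $\phi_n\to\psi$ for some $\psi\in\CC(Z)$. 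Since $\{f_n:n\in\omega\}\cup\{f_0\}$ is compact, $\phi_n\to\psi$ uniformly on it, and combining this with the continuity of $\psi$ and $f_n\to f_0$ forces $|\phi_n(f_n)-\phi_n(f_0)|\to0$, contradicting the choice of the $\phi_n$. The only delicate ingredient is the reduction to equicontinuity-at-a-point; everything after it is forced.

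\emph{For part (i).} I would argue the contrapositive. As $C_p(X)$ is a topological vector space it is homogeneous, so if $C_p(X)$ is not $\kappa$-Fr\'echet--Urysohn we may assume there is an open $U$ with $0\in\overline{U}$ admitting no sequence converging to $0$. From this I would manufacture a compact $\KK\subseteq\CC(C_p(X))$ that is \emph{not} evenly continuous, namely a nontrivial convergent sequence $\phi_n\to0$ together with its limit $0$. Two requirements must be met: (a) points $f_n\in U$ with $\phi_n(f_n)\ge1$ such that every neighbourhood of $0$ in $C_p(X)$ contains $f_n$ for infinitely many $n$ (available since $0\in\overline{U}$), which makes even continuity fail at $(0,0)$ because the typical neighbourhoods of the limit $0$ in $\KK$ are the tails $\{\phi_m:m\ge N\}\cup\{0\}$; and (b) $\phi_n\to0$ in the compact-open topology, i.e. $\sup_{f\in K}|\phi_n(f)|\to0$ for every compact $K\subseteq C_p(X)$. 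One builds the $\phi_n$ as continuous functions concentrated near $f_n$ (it is natural to organise the choice of the $f_n$ and their ``supports'' through Sakai's combinatorial description of $\kappa$-Fr\'echet--Urysohnness for $C_p(X)$, Theorem~\ref{t:Sakai-Cp-k-FU}, realising the $\phi_n$ as finitely supported functionals whose action on $C_p(X)$ is transparent).

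The heart of the matter, and the step I expect to be the main obstacle, is securing (b) while keeping (a): the ``supports'' of the $\phi_n$ must form a family meeting each compact subset of $C_p(X)$ in only finitely many members, for otherwise $\phi_n$ would not tend to $0$ uniformly on compacta. This is exactly where the hypothesis that no sequence from $U$ converges to $0$ is indispensable, since any compact set that destroyed (b) would, together with the clustering of the $f_n$ at $0$ coming from (a), yield a subsequence of $(f_n)\subseteq U$ converging to $0$. Granting that the supports can be chosen with this compact-finiteness, $\KK=\{\phi_n:n\in\omega\}\cup\{0\}$ is compact, while (a) witnesses the failure of even continuity, so $C_p(X)$ is not Ascoli, completing the contrapositive.
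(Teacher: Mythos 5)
Your part (ii) is correct, and it takes a genuinely different route from the paper's. You reduce even continuity to equicontinuity at each point (via the triangle inequality and continuity of the point evaluations on $\CC(C_p(X))$), and you use first countability of $\KK$ only through sequential compactness, closing the argument by uniform convergence on the compact set $\{f_n:n\in\w\}\cup\{f_0\}$. This argument never touches the linear structure or the basic neighbourhoods of $C_p(X)$, so it in fact proves a more general statement: any $\kappa$-Fr\'echet--Urysohn Tychonoff space all of whose compact subsets of $\CC(\cdot)$ are first countable is Ascoli. The paper's proof of (ii) is specific to $C_p(X)$: it uses homogeneity to move the bad point to $0$, uses first countability of $\KK$ only to get a countable base $\{\Op_n\}$ at the discontinuity point $\phi$, runs the inductive construction of Lemma~\ref{l:Cp-Ascoli-1} (which manipulates basic neighbourhoods $[h,A,\e]$ and Tychonoff-ness of $X$), and only then applies $\kappa$-Fr\'echet--Urysohnness. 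Your version is shorter and isolates exactly what is needed.

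Part (i), however, is not a proof: the step you flag and then ``grant'' is a genuine gap, and the mechanism you offer to fill it does not work. There are two problems. First, your requirement (a) --- a single sequence $(f_n)\subseteq U$ clustering at $0$ --- does not follow from $0\in\overline{U}$: it is equivalent to $0$ lying in the closure of a \emph{countable} subset of $U$, and nothing guarantees this, since $C_p(X)$ has countable tightness only when all finite powers of $X$ are Lindel\"of. (This defect is repairable: as in Lemma~\ref{l:Cp-Ascoli-1}, one should arrange that $0$ lies in the closure of the union of countably many basic open subsets of $U$, rather than of countably many points.) Second, and fatally, the compact-finiteness in (b) cannot be extracted from ``no sequence in $U$ converges to $0$'' in the way you claim. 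If a compact $K\subset C_p(X)$ meets infinitely many of the supports $U_{n_k}$, all you obtain are points $g_k\in K\cap U_{n_k}$; these are not the $f_{n_k}$, compact subsets of $C_p(X)$ need not be sequentially compact (for $X$ uncountable discrete, $[0,1]^X\subset C_p(X)$ contains a copy of $\beta\w$), and even when they are, the limit of a convergent subsequence of $(g_k)$ is merely some element of $K$: nothing forces it to be $0$. Even with the paper's careful construction, points taken from the $n_k$-th basic sets are only guaranteed to be small on the countable set $\bigcup_n A_n$, so their cluster points vanish there but can be arbitrary elsewhere; a sequence converging to $0$ in $C_p(X)$ simply cannot be manufactured this way.

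This is precisely why the paper does not argue the contrapositive directly. From the Ascoli property, its Lemma~\ref{l:Cp-Ascoli-2} extracts only the weak conclusion that \emph{some} compact set meets infinitely many members of the disjoint-from-$0$ family $\{O_k\}$; this is then converted not into a convergent sequence but into point-finiteness of a family of open subsets of $X$, using pointwise boundedness of compact subsets of $C_p(X)$ --- that is, into Sakai's property $(\kappa)$ of $X$. The passage from property $(\kappa)$ back to $\kappa$-Fr\'echet--Urysohnness of $C_p(X)$ is Sakai's Theorem~\ref{t:Sakai-Cp-k-FU}, a nontrivial ingredient for which your sketch has no substitute. As it stands, your part (i) is a strategy outline whose central step is both unproved and, in the form stated, unjustifiable.
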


Recall  that a regular space $X$ is {\em cosmic} if it is a continuous image of a separable metrizable space, see \cite{Mich}. Michael proved in \cite{Mich} that every compact subset of a cosmic space is metrizable, and if $X$ is a cosmic space then $C_p(X)$ and hence $C_p(C_p(X))$ are cosmic. So all compact subsets of $C_p(C_p(X))$ and hence $\CC(C_p(X))$ are metrizable. This remark and  Theorem \ref{t:Cp-Ascoli-k-FU} imply
\begin{corollary} \label{c:Cp-k-FU-metr}
If $X$ is a cosmic space, then $C_p(X)$ is Ascoli if and only if it is $\kappa$-Fr\'echet--Urysohn. 
\end{corollary}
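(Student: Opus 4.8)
The plan is to obtain both implications directly from Theorem~\ref{t:Cp-Ascoli-k-FU}, invoking the cosmic hypothesis only to verify the first-countability condition appearing in part~(ii). The forward implication requires nothing beyond the theorem: if $C_p(X)$ is Ascoli, then it is $\kappa$-Fr\'echet--Urysohn by Theorem~\ref{t:Cp-Ascoli-k-FU}(i), and this holds for an arbitrary Tychonoff $X$, cosmic or not.

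For the converse I would assume $X$ cosmic and $C_p(X)$ $\kappa$-Fr\'echet--Urysohn, and aim to apply Theorem~\ref{t:Cp-Ascoli-k-FU}(ii). The single remaining hypothesis to be checked is that every compact $K\subset \CC(C_p(X))$ is first-countable. I would in fact establish the stronger assertion that every such $K$ is metrizable. Since $X$ is cosmic, Michael's results quoted above give that $C_p(X)$, and then $C_p(C_p(X))$, are cosmic; consequently every compact subset of $C_p(C_p(X))$ is metrizable.

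The only genuine point is to transfer metrizability from the pointwise to the compact-open topology on a compact set, which is precisely the content of the ``hence'' in the remark preceding the statement. Explicitly, let $K\subset \CC(C_p(X))$ be compact. The compact-open topology is finer than the topology of pointwise convergence, so the identity restricts to a continuous bijection of $K$ onto the corresponding compact set $K'\subset C_p(C_p(X))$. As $K$ is compact and $K'$ is Hausdorff, this bijection is a homeomorphism; since $K'$ is metrizable, being a compact subset of a cosmic space, so is $K$, and in particular $K$ is first-countable. Theorem~\ref{t:Cp-Ascoli-k-FU}(ii) then yields that $C_p(X)$ is Ascoli, completing the proof.

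I do not expect a substantive obstacle, since Theorem~\ref{t:Cp-Ascoli-k-FU} together with the metrizability remark carry essentially all the weight; the only subtlety is the routine passage of metrizability across the two topologies via the standard fact that a continuous bijection from a compact space onto a Hausdorff space is a homeomorphism.
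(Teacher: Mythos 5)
Your proposal is correct and takes essentially the same route as the paper: both implications are drawn from Theorem~\ref{t:Cp-Ascoli-k-FU}, with Michael's results on cosmic spaces giving metrizability (hence first-countability) of all compact subsets of $\CC(C_p(X))$. Your explicit argument transferring metrizability from the pointwise to the compact-open topology (continuous bijection from a compact space onto a Hausdorff space is a homeomorphism) is simply a spelled-out version of the ``hence'' in the paper's remark preceding the corollary, so there is no substantive difference.
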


The second principle result of Section \ref{sec:Cp} is the following theorem, which extends an unpublished result of Morishita \cite[Theorem 10.7]{husek} and \cite[Corollary 4.2]{cascales}, see also Corollary \ref{c:Cp-Ascoli-FU} below.
\begin{theorem}  \label{t:Cech-complete-Ascoli-scat}
\begin{itemize}
 \item[(i)] If $X$ is \v{C}ech-complete and $C_p(X)$ is Ascoli, then $X$ is scattered.
 \item[(ii)] If $X$ is scattered and stratifiable, then $C_p(X)$ is an Ascoli space.
\end{itemize}
\end{theorem}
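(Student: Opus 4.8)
The plan is to reduce both parts to the characterization of the $\kappa$-Fr\'echet--Urysohn property furnished by Theorem~\ref{t:Cp-Ascoli-k-FU} and by Sakai's theorem (Theorem~\ref{t:Sakai-Cp-k-FU}), and then to feed in the topological structure of \v{C}ech-complete, respectively scattered stratifiable, spaces. For (i) I would argue by contraposition: assuming $X$ is \v{C}ech-complete but not scattered, I want to show $C_p(X)$ is not Ascoli, and by Theorem~\ref{t:Cp-Ascoli-k-FU}(i) it suffices to show that $C_p(X)$ is not $\kappa$-Fr\'echet--Urysohn. First I would pass to the \emph{perfect kernel} $P$ of $X$, i.e. the stable value of the Cantor--Bendixson derivatives; since $X$ is not scattered, $P$ is a non-empty closed subspace, hence itself \v{C}ech-complete, and it is crowded. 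The first genuine step is a Cantor-scheme construction: using a complete sequence of open covers witnessing \v{C}ech-completeness of $P$ together with the fact that every non-empty open subset of the crowded Hausdorff space $P$ splits into two disjoint non-empty open subsets, I would build a binary tree of open sets whose branch intersections are singletons, producing a compact subspace $K\subseteq P\subseteq X$ homeomorphic to the Cantor set $2^{\omega}$; being compact, $K$ is closed in $X$.

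The base case is that $C_p(2^{\omega})$ is not $\kappa$-Fr\'echet--Urysohn: $2^{\omega}$ is a non-scattered compact metrizable space, so it fails the covering property of Sakai's theorem (equivalently, by Corollary~\ref{c:Cp-k-FU-metr}, $C_p(2^{\omega})$ is not Ascoli). The crux of part (i) is then to transfer this failure from the closed subspace $K$ up to $X$. I would first try to realise $C_p(K)$ as a retract of $C_p(X)$: if $K$ were a retract of $X$ via $r\colon X\to K$, then $g\mapsto g\circ r$ embeds $C_p(K)$ as a retract of $C_p(X)$, and since the $\kappa$-Fr\'echet--Urysohn property is inherited by retracts, $C_p(X)$ could not be $\kappa$-Fr\'echet--Urysohn either. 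Because $K$ need not be a retract of a general Tychonoff $X$, the safer route is to phrase everything through Sakai's characterization and to verify that the covering property of $X$ is destroyed by the presence of the closed copy of $2^{\omega}$; this reduces to checking that Sakai's property passes to closed subspaces, or, equivalently, that the restriction map $C_p(X)\to C_p(K)$ is open onto its dense image. This transfer is the step I expect to be the main obstacle in (i).

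For (ii) the plan is to apply Theorem~\ref{t:Cp-Ascoli-k-FU}(ii), so I must establish two facts for $X$ scattered and stratifiable: (a) $C_p(X)$ is $\kappa$-Fr\'echet--Urysohn, and (b) every compact subset of $\CC(C_p(X))$ is first-countable. For (a) I would again use Sakai's theorem: the Cantor--Bendixson decomposition writes $X$ as a transfinite union of relatively discrete scattered levels, and, combined with the paracompactness and perfect normality supplied by stratifiability, this should yield the covering property characterizing the $\kappa$-Fr\'echet--Urysohn-ness of $C_p(X)$. For (b) I would exploit stratifiability more heavily: I would aim to show that $C_p(X)$ is stratifiable and then invoke the classical fact that a compact stratifiable space is metrizable, so that, via the structure of compact subsets of $\CC(C_p(X))$, every such compact set is metrizable and in particular first-countable.

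The two hard points are thus, for (i), the transfer of the failure of the $\kappa$-Fr\'echet--Urysohn property from the Cantor subspace $K$ to all of $X$, making the embedded $2^{\omega}$ interact correctly with Sakai's covering condition; and, for (ii), the verification of condition (b), namely controlling the compact subsets of the doubly iterated function space $\CC(C_p(X))$ and showing they are first-countable. The latter is where stratifiability must be pushed through two successive function-space constructions, and I expect it to require the most care.
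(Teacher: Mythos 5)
Your reduction in part (i) --- Ascoli $\Rightarrow$ $\kappa$-Fr\'echet--Urysohn (Theorem~\ref{t:Cp-Ascoli-k-FU}(i)) $\Rightarrow$ property $(\kappa)$ (Theorem~\ref{t:Sakai-Cp-k-FU}), so that it suffices to show a non-scattered \v{C}ech-complete space fails $(\kappa)$ --- is exactly the paper's, and the ``transfer'' step you flag as the main obstacle is a non-issue: property $(\kappa)$ is hereditary with respect to \emph{all} subspaces (Sakai, \cite[Proposition 3.7]{Sak2}), which is precisely what the paper quotes; no retraction and no openness of the restriction map is needed. The genuine gap in (i) is the step you treat as routine: a crowded \v{C}ech-complete space need \emph{not} contain a copy of $2^\w$. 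Take $X=\beta\w$: it is compact (hence \v{C}ech-complete) and non-scattered, and its perfect kernel is $\w^*=\beta\w\setminus\w$, which contains no non-trivial convergent sequence and hence no copy of $2^\w$; worse, no point of $\w^*$ is a $G_\delta$, so no Cantor scheme there can have singleton branch intersections. What a Cantor scheme does give, using completeness, is a compact $K\subset P$ together with a continuous surjection of $K$ onto $2^\w$; but then, to make the Baire-category argument defeat strong point-finiteness inside $K$, you must pull dense sets back through this map, and for that the surjection must be irreducible. That is precisely the paper's Proposition~\ref{p:Cech-complete-kappa-scat}: reduce to a non-scattered compact $K\subset X$, take an \emph{irreducible} continuous surjection $f:K\to[0,1]$, lift a disjoint sequence of finite sets whose infinite subunions are dense, and apply the Baire theorem in $K$. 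So your plan for (i) is repairable, but the repair is essentially the paper's proof.

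Part (ii) has a more serious gap. Your route goes through Theorem~\ref{t:Cp-Ascoli-k-FU}(ii) and therefore needs every compact subset of $\CC(C_p(X))$ to be first-countable; you propose to obtain this by proving that $C_p(X)$ is stratifiable. This premise is false for essentially every space the theorem is about: if $X$ is an uncountable discrete space (scattered and metrizable, hence stratifiable), then $C_p(X)=\IR^X$ contains a closed copy of $\w^{\w_1}$, which is not normal by Stone's theorem, so $\IR^X$ is not even normal --- whereas stratifiable spaces are paracompact, hence normal. (Indeed, by a result of Gartside and Reznichenko, stratifiability of $C_p(X)$ forces $X$ to be countable, i.e.\ the trivial metrizable case.) Moreover, even granting stratifiability of $C_p(X)$, nothing converts that into first countability of compacta of the \emph{function space} $\CC(C_p(X))$: the analogous step in Corollary~\ref{c:Cp-k-FU-metr} works only because the property ``cosmic'' is preserved by $C_p(\cdot)$, and stratifiability is not. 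The paper's proof of (ii) avoids iterated function spaces entirely: stratifiability is used only through the Borges--Dugundji linear extension theorem, which shows that a scattered stratifiable space has property $(\star)$ (with $C=\{x\}$); then Proposition~\ref{p:Cp-Ascoli-czech-complete-paracomp} runs a transfinite induction on Cantor--Bendixson rank to embed $C_p(X)$ linearly into a product $\prod_{K\in\KK}C_p(K)$ over compact scattered sets so that the image contains the dense $\sigma$-product; that $\sigma$-product is Fr\'echet--Urysohn (Arhangel'skii's theorem for Lindel\"of scattered spaces combined with Proposition~\ref{p:Noble-Sigma-product}), so $C_p(X)$ is covered by dense Fr\'echet--Urysohn, hence Ascoli, subspaces, and Lemma~\ref{p:Ascoli-dense-subgroup} (homogeneity) finishes. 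The idea missing from your proposal is exactly this one: instead of making the compacta of $\CC(C_p(X))$ small, one exhibits dense Ascoli subspaces of the homogeneous space $C_p(X)$.
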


Since a metrizable space $X$ is \v{C}ech-complete if and only if it is completely metrizable, and since every metrizable space is stratifiable,  Theorem \ref{t:Cech-complete-Ascoli-scat} implies
\begin{corollary} \label{c:Cp-Ascoli-metr-scat}
If $X$ is a completely metrizable (and separable) space, then $C_p(X)$ is Ascoli if and only if $X$ is scattered (and countable).
\end{corollary}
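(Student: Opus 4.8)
The plan is to read both equivalences off Theorem \ref{t:Cech-complete-Ascoli-scat}, using the two classical facts recalled just before the statement: for a metrizable space \v{C}ech-completeness coincides with complete metrizability, and every metrizable space is stratifiable. These let me route the hypotheses into the two halves of that theorem.

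For the unparenthesized equivalence I would argue the two implications separately. Assume first that $C_p(X)$ is Ascoli. Since $X$ is completely metrizable it is \v{C}ech-complete, so Theorem \ref{t:Cech-complete-Ascoli-scat}(i) yields that $X$ is scattered. Conversely, assume $X$ is scattered; since $X$ is metrizable it is stratifiable, so Theorem \ref{t:Cech-complete-Ascoli-scat}(ii) yields that $C_p(X)$ is Ascoli. This settles the main statement.

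For the parenthetical strengthening, $X$ is in addition separable, hence Polish, and it suffices to show that a Polish space is scattered if and only if it is countable; combining this with the main statement then gives the parenthetical equivalence. This is a direct consequence of the Cantor--Bendixson theorem: a Polish space $X$ is the disjoint union of its perfect kernel $P$ and a countable remainder. If $X$ is scattered, then the closure of any nonempty dense-in-itself set would be a nonempty perfect subset, forcing $P=\emptyset$ and hence $X$ countable; conversely, if $X$ is uncountable then $P\neq\emptyset$ (a nonempty perfect Polish space has cardinality $\mathfrak{c}$), and $P$ itself witnesses that $X$ is not scattered.

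The main obstacle is purely conceptual bookkeeping rather than any hard estimate: one must match ``completely metrizable'' with ``\v{C}ech-complete plus metrizable'' and ``scattered metrizable'' with ``scattered plus stratifiable'' so that the two implications of Theorem \ref{t:Cech-complete-Ascoli-scat} are invoked in the correct directions. The only genuinely new ingredient beyond quoting that theorem is the Polish equivalence of scatteredness and countability, which is standard descriptive set theory via the perfect-set property.
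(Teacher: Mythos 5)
Your treatment of the unparenthesized equivalence is exactly the paper's: the corollary is read off Theorem \ref{t:Cech-complete-Ascoli-scat} using the two facts that a metrizable space is \v{C}ech-complete if and only if it is completely metrizable, and that every metrizable space is stratifiable. That part is correct and needs no changes.

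There is, however, a logical gap in your handling of the parenthetical claim. You correctly reduce it to showing that a Polish space is scattered if and only if it is countable, but the two halves of your argument are contrapositives of each other: ``scattered $\Rightarrow$ countable'' and ``uncountable $\Rightarrow$ not scattered'' are the same implication, so the direction you actually need for ``$X$ countable $\Rightarrow$ $C_p(X)$ Ascoli'', namely ``countable $\Rightarrow$ scattered'', is never established. The fix is immediate from the very fact you quote: if a Polish space $X$ is not scattered, it contains a nonempty dense-in-itself set, whose closure is a nonempty perfect Polish subspace and hence has cardinality $\mathfrak{c}$, so $X$ is uncountable. (Alternatively, and even more cheaply, if $X$ is countable then $C_p(X)$ is metrizable and hence Ascoli, with no need to mention scatteredness at all.) With either patch your proof is complete and agrees in substance with the paper, which states the corollary as an immediate consequence of Theorem \ref{t:Cech-complete-Ascoli-scat} and leaves the parenthetical part implicit.
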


The following corollary strengthens also Proposition 6.6 of \cite{GKP}.
\begin{corollary} \label{c:compact-Ascoli-scat}
Let $X$ be a compact space. Then $C_p(X)$ is Ascoli if and only if $C_p(X)$ is Fr\'{e}chet--Urysohn if and only if $X$ is scattered.
\end{corollary}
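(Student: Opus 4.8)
The plan is to establish the cycle of implications
\[
C_p(X)\ \text{Fr\'echet--Urysohn}\ \Longrightarrow\ C_p(X)\ \text{Ascoli}\ \Longrightarrow\ X\ \text{scattered}\ \Longrightarrow\ C_p(X)\ \text{Fr\'echet--Urysohn},
\]
which proves all three equivalences at once. The first arrow needs nothing special about $X$: it is one of the implications recorded in the introductory diagram, since every Fr\'echet--Urysohn space is sequential, hence a $k$-space, hence a $k_\IR$-space, hence Ascoli. For the second arrow I would use that every compact space is \v{C}ech-complete, so that Theorem \ref{t:Cech-complete-Ascoli-scat}(i) applies verbatim and forces $X$ to be scattered whenever $C_p(X)$ is Ascoli. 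This is the step carrying the new content of the paper, and it reduces to merely invoking the already-established theorem.

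The remaining arrow, $X$ scattered $\Rightarrow C_p(X)$ Fr\'echet--Urysohn, is the heart of the matter and the step I expect to be the main obstacle, because it cannot be read off from the earlier results: a compact scattered space is in general neither cosmic nor stratifiable, so neither Corollary \ref{c:Cp-k-FU-metr} nor Theorem \ref{t:Cech-complete-Ascoli-scat}(ii) applies. Here I would invoke the Pytkeev--Gerlits--Nagy characterization quoted in the introduction, namely that $C_p(X)$ is Fr\'echet--Urysohn if and only if $X$ has the Gerlits--Nagy property $(\gamma)$, i.e.\ every open $\omega$-cover of $X$ admits a $\gamma$-subcover. It then suffices to show that a compact scattered space has property $(\gamma)$.

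I would prove this by transfinite induction on the Cantor--Bendixson height of $X$. Given an open $\omega$-cover $\mathcal U$, compactness confines the relevant data to finitely many Cantor--Bendixson levels; one covers the top isolated layer by finitely many members of $\mathcal U$, applies the inductive hypothesis to the compact scattered remainder of strictly smaller height, and splices the resulting subfamilies into a single $\gamma$-subcover of $X$. The only delicate point is the bookkeeping needed to check that the spliced family is a $\gamma$-cover of the \emph{whole} space and not merely of each piece, and that the induction terminates at limit stages; compactness of $X$, which makes the subfamilies at each successor step finite, is exactly what keeps this under control. Alternatively, since this implication is classical (Gerlits--Nagy), one may simply cite it; the converse direction $(\gamma)\Rightarrow$ scattered, not needed here, follows from the fact that a non-scattered compact space maps continuously onto $[0,1]$, which fails $(\gamma)$. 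With the scattered $\Rightarrow$ Fr\'echet--Urysohn direction in hand the cycle closes, and the three conditions are equivalent.
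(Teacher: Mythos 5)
Your cycle of implications is the right skeleton, and its first two arrows are exactly the paper's: Fr\'echet--Urysohn $\Rightarrow$ Ascoli is trivial, and Ascoli $\Rightarrow$ scattered is Theorem \ref{t:Cech-complete-Ascoli-scat}(i) applied to the compact (hence \v{C}ech-complete) space $X$. For the third arrow the paper does precisely what you offer only as a fallback: in the proof of Corollary \ref{c:Cp-Ascoli-FU} it simply quotes \cite[Theorem~II.7.16]{Arhangel}, which gives Fr\'echet--Urysohnness of $C_p(X)$ for every scattered Lindel\"of \v{C}ech-complete (in particular compact scattered) space. So the ``cite the classical result'' version of your proposal is correct and essentially identical to the paper's argument.

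Your primary plan --- reproving ``compact scattered $\Rightarrow$ property $(\gamma)$'' by induction on Cantor--Bendixson height --- has a genuine gap, and it sits exactly where you wave at ``bookkeeping.'' First, compactness does \emph{not} confine matters to finitely many Cantor--Bendixson levels (consider the ordinal $\omega^\omega+1$); it only forces the height to be a successor $\alpha+1$ with finite top level $X^{(\alpha)}$. Second, the decomposition fails in either reading of ``top isolated layer.'' If it means $\mathit{Iso}(X)$, this set cannot in general be covered by finitely many members of an $\omega$-cover: already for $X=\omega+1$ take the $\omega$-cover $\{[0,n]:n\in\omega\}\cup\{X\setminus A_m:m\in\omega\}$ with $A_m=\{n\in\omega: n\geq m,\ n \mbox{ even}\}$; any finite subfamily misses infinitely many even integers. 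If it means the finite top level $X^{(\alpha)}$, the natural compact remainder is $K=X\setminus U_0$ for some $U_0\in\mathcal{U}$ covering $X^{(\alpha)}$, and then the splicing breaks down: the $\gamma$-subcover of $K$ provided by the inductive hypothesis is \emph{infinite} (your claim that compactness makes the subfamilies at each successor step finite is false), and nothing prevents its members from missing the points of $U_0\setminus K$ --- including the top points themselves --- infinitely often, so the spliced family is not a $\gamma$-cover of $X$. Trying to repair this by restricting to members of $\mathcal{U}$ containing $X^{(\alpha)}$ and iterating hits the dual obstruction: once you thin the cover to serve one compact piece, the thinned family need no longer be an $\omega$-cover of the next piece, so the inductive hypothesis can no longer be applied. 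This difficulty is real, not cosmetic: the known proofs of this implication (Gerlits, building on Telg\'arsky's analysis of games on scattered spaces) work with winning strategies in point-open/$\omega$-cover games precisely because a strategy responds dynamically to the covers and thus avoids the static thin-and-splice problem. Keep your cycle, but for scattered $\Rightarrow$ Fr\'echet--Urysohn cite \cite[Theorem~II.7.16]{Arhangel} (as the paper does) or the Gerlits--Nagy literature rather than the sketched induction.
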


The second part of our paper deals with the Ascoli spaces $\CC(X)$. In \cite{Pol-1974} Pol gave a complete characterization of those first-countable paracompact spaces $X$ for which the space $\CC(X,\II)$ is a $k$-space, where $\II=[0,1]$.

In \cite{Gabr-C2} the first named author described all zero-dimensional metric spaces $X$ for which the space $\CC(X,2)$ is Ascoli, where $2=\{ 0,1\}$ is the doubleton.

On the other hand, it is proved in \cite{GKP} that if $X$ is a first-countable paracompact $\sigma$-space, then $\CC(X,\II)$ is Ascoli if and only if $\CC(X)$ is Ascoli if and only if $X$ is a locally compact metrizable space. However this result does not cover the case for $X$ being a  non-metrizable compact space $X$ for which clearly the Banach space $\CC(X)$ is Ascoli.
The next theorem, which is the main result of     Section \ref{sec:Ck},
 extends all results mentioned above.
We prove the following
\begin{theorem} \label{t:Ck-Ascoli-point-count-type}
For a paracompact space $X$ of point-countable type the following conditions are equivalent:
\begin{enumerate}
\item[{\rm (i)}] $X$ is  locally compact;
\item[{\rm (ii)}] $X=\bigoplus_{i\in\kappa} X_i$, where all $X_i$ are Lindel\"{o}f locally compact spaces;
\item[{\rm (iii)}] $\CC(X)$ is a  $k_\IR$-space;
\item[{\rm (iv)}] $\CC(X)$ is an Ascoli space;
\item[{\rm (v)}] $\CC(X,\II)$  is a  $k_\IR$-space;
\item[{\rm (vi)}] $\CC(X,\II)$ is an Ascoli space.
\end{enumerate}
In cases (i)--(vi), the spaces $\CC(X)$  and $\CC(X,\II)$ are  homeomorphic to
 products of families of complete metrizable spaces.
\end{theorem}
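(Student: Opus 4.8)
The plan is to traverse the cycle of implications, isolating \textup{(vi)}$\Rightarrow$\textup{(i)} as the only substantial step; the remaining arrows are formal. The implication \textup{(ii)}$\Rightarrow$\textup{(i)} is immediate, since local compactness is a local property and each summand $X_i$ is clopen. Because every $k_\IR$-space is Ascoli (recalled in the Introduction), \textup{(iii)}$\Rightarrow$\textup{(iv)} and \textup{(v)}$\Rightarrow$\textup{(vi)} are free. The retraction $r\colon\IR\to\II$, $r(t)=\max\{0,\min\{1,t\}\}$, induces a continuous retraction $\CC(X)\to\CC(X,\II)$, $f\mapsto r\circ f$; as both the $k_\IR$ and the Ascoli properties pass to retracts, this yields \textup{(iii)}$\Rightarrow$\textup{(v)} and \textup{(iv)}$\Rightarrow$\textup{(vi)}. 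Thus once \textup{(i)}$\Rightarrow$\textup{(ii)}$\Rightarrow$\textup{(iii)} and the closing arrow \textup{(vi)}$\Rightarrow$\textup{(i)} are established, all six conditions are equivalent.

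For \textup{(i)}$\Rightarrow$\textup{(ii)} I would invoke the classical structure theorem for locally compact paracompact spaces: such an $X$ is a topological sum $\bigoplus_{i\in\kappa}X_i$ of clopen $\sigma$-compact (hence Lindel\"of and locally compact) subspaces. For \textup{(ii)}$\Rightarrow$\textup{(iii)} note that the compact-open topology turns a function space over a topological sum into a product, so $\CC(X)=\prod_{i\in\kappa}\CC(X_i)$. Each $X_i$, being Lindel\"of and locally compact, is a hemicompact $k$-space, whence $\CC(X_i)$ is a completely metrizable (in fact Fr\'echet) space. It then remains to quote the theorem that any product of completely metrizable spaces is a $k_\IR$-space; applying the same reasoning to $\CC(X,\II)=\prod_i\CC(X_i,\II)$ gives \textup{(v)} as well and, simultaneously, the final assertion that $\CC(X)$ and $\CC(X,\II)$ are homeomorphic to products of complete metrizable spaces.

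The heart of the matter is \textup{(vi)}$\Rightarrow$\textup{(i)} (equivalently \textup{(iv)}$\Rightarrow$\textup{(i)}, since \textup{(iv)}$\Rightarrow$\textup{(vi)}), which I would prove by contraposition. Assume $X$ is \emph{not} locally compact and fix a point $x_0$ with no compact neighbourhood. Using that $X$ is of point-countable type, choose a compact set $K\ni x_0$ with a countable decreasing outer base $\{U_n\}_{n\in\NN}$; since $x_0$ has no compact neighbourhood and each $U_n\supseteq K\ni x_0$, no $\overline{U_n}$ is compact. From this data I would extract a sequence of points $x_n$ that are eventually inside each $U_n$ yet \emph{escape every compact subset} of $X$, together with functions $f_n\in\CC(X,\II)$ supported near the $x_n$ with $f_n(x_n)=1$, and form the compact set $\Phi=\{\widehat{x}_n:n\in\NN\}\cup\{\widehat{x}_0\}\subseteq\CC(\CC(X,\II))$ of evaluation functionals $\widehat{x}(f)=f(x)$. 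The goal is to show that $\Phi$ witnesses failure of even continuity: the evaluation $\CC(X,\II)\times\Phi\to\IR$, $(f,\widehat{x})\mapsto f(x)$, is discontinuous at $(\mathbf 0,\widehat{x}_0)$, because $f_n\to\mathbf 0$ in the compact-open topology (every compact set meets only finitely many supports) while $f_n(x_n)=1\not\to 0$. Hence $\CC(X,\II)$ is not Ascoli.

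The main obstacle is precisely this construction in the \emph{second} function space. One must verify both that $\Phi$ is genuinely compact in $\CC(\CC(X,\II))$—that is, $\widehat{x}_n\to\widehat{x}_0$, which amounts to $\sup_{f\in\KK}|f(x_n)-f(x_0)|\to 0$ for every compact $\KK\subseteq\CC(X,\II)$—and that even continuity nonetheless breaks down. The convergence of $\Phi$ is where the escaping-to-infinity of the $x_n$, controlled through the nested, non-relatively-compact neighbourhoods $U_n$ supplied by point-countable type, is decisive. Paracompactness enters to guarantee that the locally supported test functions glue into globally continuous $\II$-valued maps and to reduce the local geometry near $x_0$ to the countable-character compactum $K$. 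The remaining verifications—continuity of $r\circ(-)$, the product identities for the two function spaces, and the hemicompactness bookkeeping—are routine.
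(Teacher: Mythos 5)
Your reduction of the whole theorem to the single implication (vi)$\Rightarrow$(i) is sound, and the easy arrows are handled correctly: (i)$\Rightarrow$(ii) is the same structure theorem the paper cites, (ii)$\Rightarrow$(iii),(v) via $\CC(X)=\prod_i\CC(X_i)$ and Noble's theorem on products of complete metrizable spaces is exactly the paper's argument, and the retraction $f\mapsto r\circ f$ giving (iii)$\Rightarrow$(v) and (iv)$\Rightarrow$(vi) is a legitimate (and tidy) alternative to the paper's direct treatment. The fatal problem is the construction you propose for (vi)$\Rightarrow$(i): the three properties you want your data to have are \emph{jointly contradictory}, so no choice of points $x_n$ and functions $f_n$ can ever realize them. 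Indeed, suppose $\Phi=\{\hat{x}_n:n\in\w\}\cup\{\hat{x}_0\}$ were compact in $\CC(\CC(X,\II))$ with $\hat{x}_n\to\hat{x}_0$, while $f_n\to\mathbf{0}$ in $\CC(X,\II)$ and $f_n(x_n)=1$. Then $K:=\{f_n:n\in\w\}\cup\{\mathbf{0}\}$ is a compact subset of $\CC(X,\II)$, so the convergence $\hat{x}_n\to\hat{x}_0$ is uniform on $K$, giving $|f_n(x_n)-f_n(x_0)|\le\sup_{g\in K}|g(x_n)-g(x_0)|\to 0$; since also $f_n(x_0)\to\mathbf{0}(x_0)=0$, we get $f_n(x_n)\to 0$, contradicting $f_n(x_n)=1$. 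Equivalently: since $X$ is Tychonoff, $x\mapsto\hat{x}$ embeds $X$ into $\II^{C(X,\II)}$, so even \emph{pointwise} convergence $\hat{x}_n\to\hat{x}_0$ already forces $x_n\to x_0$ in $X$; then $\{x_n\}\cup\{x_0\}$ is compact, so the $x_n$ cannot escape every compact set, which destroys the convergence $f_n\to\mathbf{0}$ you need. In short, a compact set of evaluation functionals can never witness failure of even continuity along a convergent sequence of functions, so this route is closed in principle, not just in execution.

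The paper avoids this trap by building the non-evenly-continuous compact set out of objects of a different kind. In Lemma~\ref{l:Ascoli-loc_pseud_comp} one works entirely inside the function space $Y=\CC(X)$ or $\CC(X,\II)$: one produces points $f_{i,k}\in Y$ clustering at $\mathbf{0}$ and open sets $V_{i,k}\subseteq Y$ such that every compact subset of $Y$ meets only finitely many $V_{i,k}$ --- this last verification, carried out with the classical Arzel\`a--Ascoli equicontinuity theorem applied to restrictions of compact subsets of $Y$ to suitable compacta of $X$, is the technical heart and is entirely absent from your sketch. Proposition~\ref{p:Ascoli-sufficient} then yields that $Y$ is not Ascoli; its witness (cf.\ the proof of Lemma~\ref{l:Cp-Ascoli-2}) is a sequence of \emph{bump functions} $\phi_n\in\CC(Y)$ supported in the $V_{i,k}$, which converge to $0$ in $\CC(Y)$ precisely because compact subsets of $Y$ eventually miss their supports; no symmetry with evaluations is involved, so the contradiction above does not arise. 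Two further points where your sketch diverges from what is actually needed: the paper does not argue from ``no compact neighbourhood'' but proves Ascoli $\Rightarrow$ locally \emph{pseudocompact} (Lemma~\ref{l:Ascoli-loc_pseud_comp}, using only point-countable type) and then invokes paracompactness to upgrade pseudocompact closures to compact ones (Lemma~\ref{l:paracom-pseudo-compact}); and the locally finite families of open sets driving the construction come from non-pseudocompactness via \cite[Theorem~3.10.22]{Eng}, not from non-compactness --- your step ``extract a sequence of points that escape every compact subset'' from mere non-compactness of the $\overline{U_n}$ has no justification (and, as shown above, would be useless even if available).
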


In our forthcoming paper \cite{GGKZ-2} we show that the paracompactness assumption on $X$ cannot be omitted in Theorem \ref{t:Ck-Ascoli-point-count-type} and we provide the first  $C_{p}$-example of an Ascoli space not being a  $k_\IR$-space.



\section{The Ascoli property for $C_p(X)$} \label{sec:Cp}


Let $X$ be a Tychonoff space and $h\in C(X)$. Then  the sets of the form
\[
[h,F,\e]:= \{ f\in C(X): |f(x)-h(x)|<\e \mbox{ for all } x\in F\}, \mbox{ where } F\in [X]^{<\w} \mbox{ and } \e>0,
\]
form a base at $h$ for the  topology  $\tau_p$
of pointwise convergence  on $C(X)$. The space $C(X)$
equipped with $\tau_p$ is usually denoted by $C_p(X)$.
\begin{lemma} \label{l:Cp-Ascoli-1}
Let $\{ U_n:n\in\w \}$ be a sequence of open subsets of $C_p(X)$ such that $0\in\overline{U_n}$ for all $n$. Then for every sequence   $\{\W_n:n\in\w\}$ such that $\W_n$ is an open cover of $U_n$, for every $n$ there exists $W_n\in\W_n$ such that $0\in\overline{\bigcup\{W_n:n\in\w\}}$.
\end{lemma}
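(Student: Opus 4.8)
The plan is to verify the closure condition against the standard base at $0$. Since the sets $[0,F,\e]$ with $F\in[X]^{<\w}$ and $\e>0$ form a neighborhood base at $0$, it suffices to produce the $W_n$ so that every such $[0,F,\e]$ meets $\bigcup_n W_n$. The computational heart is the following observation about a single basic open set: if $W$ is open, $g\in W$, and $[g,G,\delta]\subseteq W$ for some finite $G$ and some $\delta>0$, then $W\cap[0,F,\e]\neq\emptyset$ as soon as $|g(x)|<\delta+\e$ for every $x\in G\cap F$. Indeed, one prescribes values on the finite set $G\cup F$: equal to $g$ on $G\setminus F$, equal to $0$ on $F\setminus G$, and on $G\cap F$ any value in the nonempty interval $(g(x)-\delta,g(x)+\delta)\cap(-\e,\e)$. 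Using that $X$ is Tychonoff, one interpolates these finitely many values by a continuous function $f$, which then lies in $[g,G,\delta]\cap[0,F,\e]\subseteq W\cap[0,F,\e]$. In particular $W$ meets $[0,F,\e]$ whenever $G\cap F=\emptyset$.

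Next I would build the $W_n$ by recursion together with auxiliary data $g_n\in U_n$, $W_n\in\W_n$, a finite set $G_n$, and $\delta_n>0$, while recording the accumulated support $E_n:=\bigcup_{i<n}G_i$. At stage $n$, since $0\in\overline{U_n}$ and $U_n$ is open, the open set $U_n\cap[0,E_n,2^{-n}]$ is nonempty; pick $g_n$ in it, so that $|g_n(x)|<2^{-n}$ for all $x\in E_n$. As $\W_n$ covers $U_n$, fix $W_n\in\W_n$ with $g_n\in W_n$, and then, using that $W_n$ is open, choose a finite $G_n$ and $\delta_n>0$ with $[g_n,G_n,\delta_n]\subseteq W_n$; finally set $E_{n+1}:=E_n\cup G_n$. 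This makes exactly one choice $W_n\in\W_n$ for each $n$, as required by the statement.

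Finally I would check that $0\in\overline{\bigcup_n W_n}$. Fix $F\in[X]^{<\w}$ and $\e>0$. Only those points of $F$ that actually occur in some $G_i$ can matter, so put $F^\ast:=F\cap\bigcup_{i}G_i$; this set is finite, hence $F^\ast\subseteq E_N$ for some $N$. Choosing $n\geq N$ with $2^{-n}<\e$, every $x\in G_n\cap F$ lies in $F^\ast\subseteq E_N\subseteq E_n$, whence $|g_n(x)|<2^{-n}<\e\le\delta_n+\e$. By the observation of the first paragraph $W_n\cap[0,F,\e]\neq\emptyset$, so $[0,F,\e]$ meets $\bigcup_n W_n$. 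As $F$ and $\e$ were arbitrary, $0\in\overline{\bigcup_n W_n}$.

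The only genuine obstacle is that a single countable selection $(W_n)$ must defeat the uncountably many neighborhoods $[0,F,\e]$ at once, so one cannot simply devote one index to each $F$. This is precisely what the recursion circumvents: the finitely many points of a given $F$ that can interact with the chosen neighborhoods all appear inside some fixed $E_N$, and driving the pointwise smallness of $g_n$ on $E_n$ to $0$ (via $2^{-n}$) then handles every $\e$ simultaneously for all large $n$. I expect the routine parts to be the Tychonoff interpolation and the bookkeeping of the sets $E_n$; I would also emphasize that no disjointness or point-finiteness of the family $\{G_n:n\in\w\}$ is needed, only that each $F^\ast$ is eventually absorbed by the increasing sets $E_n$.
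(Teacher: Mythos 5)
Your proof is correct and follows essentially the same route as the paper's: a recursion that accumulates finite supports, picks a point of $U_n$ that is pointwise small (with bound tending to $0$) on the accumulated set, selects $W_n\in\W_n$ through a basic neighbourhood, and concludes by Tychonoff interpolation against an arbitrary $[0,F,\e]$. The only cosmetic difference is that the paper enforces the nesting $[h_{n+1},A_{n+1},\e_{n+1}]\subseteq[0,A_n,1/n]$ so the interpolated function is automatically small on the relevant part of $F$, whereas you keep smallness only at the centres $g_n$ and absorb the discrepancy via the $\delta+\e$ slack in your interval-intersection observation.
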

\begin{proof}
By induction on $n$ we can construct an increasing sequence $\{ A_n:n\in\w \}$ of finite subsets of $X$, a decreasing null-sequence $\{\e_n:n\in\w\}$ of positive reals, a sequence $\{ W_n \in\W_n: n\in\w\}$ of open subsets of $X$  and a sequence $\{ h_n:n\in\w\}$ in $C_p(X)$ such that
\[
[h_n,A_n,\varepsilon_n]\subseteq W_n  \; \mbox{ and } \; [h_{n+1}, A_{n+1},\e_{n+1}] \subset [0, A_n, 1/n ].
 \]
We claim that $\{ W_n:n\in\w\}$ is as required. Indeed, fix a finite $F\subset X$ and $\e >0$, and find $n_0$ such that $F\cap(\bigcup_{n\in\w}A_n)\subset A_{n_0}$ and $\frac{1}{n_0}+\e_{n_0 +1}<\e$. Then any $h\in [h_{n_0+1},A_{n_0+1},\varepsilon_{n_0+1}]$ such that $h|_{ F\setminus A_{n_0 +1}} =0$ belongs to $[0,F,\e]$.
\end{proof}

The following statement is similar to \cite[Proposition~2.1]{GKP}.

\begin{lemma} \label{l:Cp-Ascoli-2}
Assume that $C_p(X)$ is an Ascoli space and $\{U_n:n\in\w\}$ is a sequence of open subsets of $C_p(X)$ such that $0\in\overline{\bigcup\{U_n:n\in\w\}}$
but $0\not\in\overline{U_n}$ for all $n$. Then there exists a compact subspace $K$ of $C_p(X)$ such that the set $\{n:K\cap U_n\neq\emptyset\}$ is infinite.
\end{lemma}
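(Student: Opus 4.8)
The plan is to argue by contradiction: suppose that \emph{every} compact $K\subseteq C_p(X)$ meets only finitely many of the $U_n$, i.e.\ $\{n:K\cap U_n\neq\emptyset\}$ is finite, and derive a contradiction with the Ascoli property of $C_p(X)$ together with the hypothesis $0\in\overline{\bigcup_n U_n}$. The first, purely point-set, step is to record the following consequence of the hypotheses on $0$: for every neighbourhood $O$ of $0$ and every $N\in\w$ there is $m>N$ with $U_m\cap O\neq\emptyset$. Indeed, since $0\notin\overline{U_n}$ for each $n$ and closure distributes over finite unions, $0\notin\bigcup_{n\le N}\overline{U_n}$, so there is a neighbourhood $O'\ni 0$ disjoint from $\bigcup_{n\le N}U_n$; as $0\in\overline{\bigcup_n U_n}$, the neighbourhood $O\cap O'$ meets some $U_m$, and necessarily $m>N$. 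Thus $\{n:U_n\cap O\neq\emptyset\}$ is infinite for every neighbourhood $O$ of $0$.

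Next I would manufacture, from the openness of the $U_n$ and complete regularity of $C_p(X)$, a supply of localized bumps: for each $n$ and each $q\in U_n$ there is a continuous $f_{n,q}\colon C_p(X)\to[0,1]$ with $f_{n,q}(q)=1$ and $f_{n,q}\equiv 0$ on $C_p(X)\setminus U_n$; in particular $f_{n,q}(0)=0$ since $0\notin U_n$. The point of the support condition $\{f_{n,q}\neq 0\}\subseteq U_n$ is that, under the contradiction hypothesis, a bump with large index $n$ vanishes identically on any prescribed compact set: if $K$ meets only $U_{n}$ with $n<n_K$, then $f_{n,q}|_K\equiv 0$ for all $n\ge n_K$. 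I would then assemble these bumps into a family $\mathcal C\subseteq\CC(C_p(X))$, together with the zero function $\mathbf 0$, organized so that $\mathbf 0$ is its only ``limit at infinity'' in the index $n$, and so that the tail vanishing just described makes $\mathcal C$ \emph{compact} in the compact-open topology, with every neighbourhood of $\mathbf 0$ in $\mathcal C$ absorbing all bumps of sufficiently large index.

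With such a compact $\mathcal C$ in hand, the Ascoli property of $C_p(X)$ (applied with $X$ replaced by $C_p(X)$) says that $\mathcal C$ is evenly continuous, i.e.\ the evaluation $C_p(X)\times\mathcal C\ni(g,\phi)\mapsto\phi(g)\in\IR$ is jointly continuous. Evaluating at $(0,\mathbf 0)$, where the value is $0$, joint continuity yields a neighbourhood $O\ni 0$ in $C_p(X)$, a neighbourhood $\mathcal O\ni\mathbf 0$ in $\mathcal C$, and the bound $|\phi(g)|<1$ for all $g\in O$ and $\phi\in\mathcal O$. Choosing $N$ so large that $\mathcal O$ contains every bump of index $\ge N$, and then invoking the cofinality observation of the first step, I would pick $m>N$ and $q\in U_m\cap O$. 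The bump $f_{m,q}$ then lies in $\mathcal O$, while $f_{m,q}(q)=1$ with $q\in O$, contradicting the bound. This contradiction proves the existence of a compact $K$ meeting infinitely many $U_n$.

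The technical heart, and the step I expect to be the main obstacle, is the construction of the family $\mathcal C$ in the second paragraph: one must reconcile two competing demands. On the one hand, $\mathcal C$ must ``peak'' at enough points of the $U_n$ to feed the final step, where the witness $q\in U_m\cap O$ is produced only \emph{after} $O$ is revealed by even continuity; on the other hand, $\mathcal C$ must be genuinely compact (not merely pointwise bounded) in $\CC(C_p(X))$. A single null sequence of bumps will not suffice, since $C_p(X)$ is in general not Fr\'echet--Urysohn, so the relevant witnesses approach $0$ only along nets; yet a continuous bump cannot be $\equiv 1$ on all of the open set $U_n$ while vanishing off it. The resolution I envisage is to select the peaks and supports by a bookkeeping along finite subsets of $X$ and a decreasing null-sequence of radii, exactly as in the construction in the proof of Lemma~\ref{l:Cp-Ascoli-1}, using regularity of $C_p(X)$ to keep the closures of the peak sets inside the corresponding $U_n$; the contradiction hypothesis is then precisely what forces the tail of the resulting family to vanish on every compact subset of $C_p(X)$, delivering compactness of $\mathcal C$.
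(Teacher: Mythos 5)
Your proposal is correct and follows essentially the same route as the paper's own proof: argue by contradiction, attach to each $U_n$ a continuous bump vanishing off $U_n$ and exceeding $1$ on a small open peak set, use the contradiction hypothesis to conclude that these bumps form a null sequence in $\CC(C_p(X))$ (they vanish identically on any compact set from some index on), and then contradict even continuity at $(0,\mathbf{0})$. The technical heart you isolate is resolved in the paper exactly as you envisage in your last paragraph, namely by Lemma~\ref{l:Cp-Ascoli-1}: the paper covers each $U_n$ by the family $\W_n$ of open $W\subseteq U_n$ admitting a witness $\phi$ with $\phi|_{W}>1$ and $\phi|_{C_p(X)\setminus U_n}=0$ (defining the peak set as a superlevel set of the witness, which sidesteps the normality/separation issue lurking behind your appeal to ``regularity'' for prescribing the peak set first), and then selects $W_n\in\W_n$ with $0\in\overline{\bigcup_n W_n}$ --- the same bookkeeping, with your cofinality observation playing the role of the selection step, that you propose to run inline.
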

\begin{proof}
Suppose for a contradiction that for every compact $K\subset C_p(X)$,  $K\cap U_n\neq\emptyset$ only for finitely many $n$. For every $n\in\w$, set
\[
\W_n :=\left\{ W\in \mathcal P(U_n)\cap \tau_p : \exists \phi\in C\big(C_p(X)\big)\:(\phi|_{W}>1) \wedge (\phi|_{C_p(X)\setminus U_n}=0) \right\}.
\]
Then $\W_n$ is an open cover of $U_n$, and hence, by Lemma~\ref{l:Cp-Ascoli-1}, for every $n$ there exists $W_n\in\W_n$ such that $0\in\overline{\bigcup\{W_n:n\in\w\}}$. Let $\phi_n$ be a witness for $W_n\in\W_n$. It follows from the above that $\phi_n$ converges to $0$ in $\CC(C_p(X))$:
given any compact $K\subset C_p(X)$, $\phi_n|_K$ is constant $0$ for all but finitely many $n$ (namely for all $n$ such that $K\cap U_n=\emptyset$).
On the other hand, given any open $V\subset C_p(X)$ containing $0$ and $m\in\w$, the inclusion $0\in\overline{\bigcup\{W_n:n\in\w\}}$ implies that there exists $n\geq m$ and $f\in V\cap W_n$, which yields $\phi_n(f)>1$. This proves that the convergent sequence $$\{\phi_n:n\in\w\}\cup\{0\}\subset \CC(C_p(X))$$ is not evenly continuous, a contradiction.
\end{proof}

Following Arhangel'skii, a topological space $X$ is said to be {\em $\kappa$-Fr\'{e}chet--Urysohn} if for every open subset $U$ of $X$ and every $x\in \overline{U}$, there exists a sequence $\{ x_n\}_{n\in\NN} \subseteq U$ converging to $x$. Note that the class of $\kappa$-Fr\'{e}chet-Urysohn spaces is much wider than the class of Fr\'{e}chet--Urysohn spaces \cite{LiL}.

A family $\{ A_i\}_{i\in I}$ of subsets of a set $X$ is said to be {\em  point-finite} if the set $\{i\in I: x\in A_i\}$ is finite for every $x\in X$.
A family $\{ A_i\}_{i\in I}$ of subsets of a topological space $X$ is called
{\em strongly point-finite} if for every $i\in I$, there exists an open set
$U_i$ of $X$ such that $A_i\subseteq U_i$ and
 $\{ U_i\}_{i\in I}$ is point-finite.
 Following Sakai \cite{Sak2}, a topological space $X$ is said to have
{\em property $(\kappa)$} if every pairwise disjoint sequence of finite subsets
 of $X$ has a strongly point-finite subsequence.
We shall need the following result of Sakai,
see \cite[Theorem 2.1]{Sak2}.
\begin{theorem} \label{t:Sakai-Cp-k-FU}
 The space $C_p(X)$ is $\kappa$-Fr\'{e}chet--Urysohn if and only if $X$ has property $(\kappa)$.
\end{theorem}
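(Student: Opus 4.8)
The statement is an equivalence, so the natural plan is to prove the two implications separately, reducing throughout to the point $0\in C_p(X)$: since $C_p(X)$ is a topological vector space and hence homogeneous, the $\kappa$-Fr\'echet--Urysohn property need only be verified at $0$.

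For the implication ``$\kappa$-Fr\'echet--Urysohn $\Rightarrow$ property $(\kappa)$'', I would argue as follows. Let $\{F_n:n\in\w\}$ be a pairwise disjoint sequence of finite subsets of $X$. For each $n$ put $U_n:=\{f\in C_p(X): f(x)>1 \text{ for all } x\in F_n\}$, an open subset of $C_p(X)$, and set $U:=\bigcup_n U_n$. The first step is to check that $0\in\overline U$: given a basic neighbourhood $[0,F,\e]$ of $0$, the finite set $F$ meets at most $|F|$ of the pairwise disjoint $F_n$, so some $F_n$ is disjoint from $F$, and since $X$ is Tychonoff the disjoint finite (hence closed) sets $F$ and $F_n$ can be separated by a continuous $f$ with $f|_F=0$ and $f|_{F_n}=2$; then $f\in[0,F,\e]\cap U_n\subseteq U$. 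By $\kappa$-Fr\'echet--Urysohnness there is a sequence $\{g_k\}\subseteq U$ with $g_k\to 0$. Each $g_k$ lies in some $U_{m_k}$, and I would set $V_k:=\{x\in X:g_k(x)>1\}$, an open set containing $F_{m_k}$. Since $g_k\to 0$ pointwise, every $x$ satisfies $g_k(x)>1$ for only finitely many $k$, so $\{V_k\}$ is point-finite; moreover the indices $m_k$ take infinitely many distinct values, for otherwise some fixed $F_m$ would satisfy $F_m\subseteq V_k$ for infinitely many $k$, contradicting pointwise convergence at a point of $F_m$. Passing to the distinct values of $m_k$ exhibits a subsequence of $\{F_n\}$ that is strongly point-finite, which is exactly property $(\kappa)$.

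The reverse implication ``property $(\kappa)$ $\Rightarrow$ $\kappa$-Fr\'echet--Urysohn'' is the harder half and where I expect the real work. Given an open $U$ with $0\in\overline U$, I would build recursively a pairwise disjoint sequence of finite sets $\{S_n\}$ together with functions $f_n\in U$ and finite ``determining'' sets $F_n$ satisfying $[f_n,F_n,\e_n]\subseteq U$, arranging (using $0\in\overline U$ at each stage) that $f_n$ is uniformly small, say below $2^{-n}$, on $\bigcup_{i<n}(F_i\cup S_i)$, while $S_n$ records a finite portion of $X$ lying off the previously controlled set. Property $(\kappa)$ applied to $\{S_n\}$ then yields a subsequence $\{S_{n_k}\}$ and a point-finite family of open sets $\{V_k\}$ with $S_{n_k}\subseteq V_k$. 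The plan is to replace each selected $f_{n_k}$ by its product with a continuous cutoff $\varphi_k$ that equals $1$ on the determining set $F_{n_k}$ but is supported inside $V_k$: because $\varphi_k\equiv 1$ on $F_{n_k}$, the product agrees with $f_{n_k}$ on $F_{n_k}$ and hence still lies in $[f_{n_k},F_{n_k},\e_{n_k}]\subseteq U$, while point-finiteness of $\{V_k\}$ forces each $x$ to lie outside all but finitely many of the supports, giving $f_{n_k}\varphi_k\to 0$ pointwise.

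The main obstacle is precisely the reconciliation in this last step: one must choose the finite sets $S_n$, the determining sets $F_n$, and the cutoffs $\varphi_k$ so that simultaneously (a) the modified functions never leave the open set $U$ --- which is why the determining sets and the control $[f_n,F_n,\e_n]\subseteq U$ must be threaded through the recursion --- and (b) the supports become point-finite after applying $(\kappa)$, so that the sequence converges pointwise to $0$. Making the bookkeeping of the recursion compatible with the open cover produced by property $(\kappa)$, rather than relying on the single clean construction available in the easy direction, is the delicate part; once the cutoffs are in place, both the convergence to $0$ and the membership in $U$ follow routinely.
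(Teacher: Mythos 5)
First, a point of reference: the paper does not prove Theorem \ref{t:Sakai-Cp-k-FU} at all --- it quotes it from Sakai \cite[Theorem 2.1]{Sak2} --- so your attempt can only be measured against Sakai's argument, which indeed follows the two-implication scheme you outline. Your proof of the implication ``$\kappa$-Fr\'echet--Urysohn $\Rightarrow$ property $(\kappa)$'' is complete and correct, and is essentially the standard one: the open sets $U_n=\{f: f|_{F_n}>1\}$, the separation argument giving $0\in\overline{\bigcup_n U_n}$, the sets $V_k=\{x:g_k(x)>1\}$, point-finiteness from pointwise convergence, and the observation that the indices $m_k$ cannot stabilize all check out (modulo the harmless reduction to nonempty $F_n$'s).

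The converse implication, however, has a genuine gap, and it sits exactly where you parked it. After applying property $(\kappa)$ to the pairwise disjoint ``new parts'' $S_{n_k}$, you obtain point-finite open sets $V_k$ with only $S_{n_k}\subseteq V_k$; yet your cutoff $\varphi_k$ is required to be $\equiv 1$ on the whole determining set $F_{n_k}$ while being supported inside $V_k$. Such a continuous function exists only if $F_{n_k}\subseteq V_k$, which is precisely what you do not have: the ``old part'' $E_k:=F_{n_k}\setminus S_{n_k}$ may lie entirely outside $V_k$. Nor can you feed the $F_n$'s themselves to property $(\kappa)$, since the determining sets cannot be shrunk and need not be (or be made) pairwise disjoint. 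So the construction as stated is impossible, and the claim that ``once the cutoffs are in place, both the convergence to $0$ and the membership in $U$ follow routinely'' defers the actual crux rather than resolving it. The repair is to let the support of $\varphi_k$ spill over a controlled neighbourhood of $E_k$: since $|f_{n_k}|<2^{-n_k}$ on $E_k$ by your recursion, continuity gives an open $N_k\supseteq E_k$ with $|f_{n_k}|<2^{1-n_k}$ on $N_k$; now choose $0\le\varphi_k\le 1$ continuous with $\varphi_k\equiv 1$ on $F_{n_k}$ and $\varphi_k\equiv 0$ outside $V_k\cup N_k$ (possible, as $F_{n_k}$ is a finite subset of the open set $V_k\cup N_k$ and $X$ is Tychonoff). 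Then $g_k:=f_{n_k}\varphi_k$ coincides with $f_{n_k}$ on $F_{n_k}$, hence $g_k\in[f_{n_k},F_{n_k},\varepsilon_{n_k}]\subseteq U$; and for any $x$, apart from the finitely many $k$ with $x\in V_k$, either $\varphi_k(x)=0$ or $x\in N_k$, whence $|g_k(x)|<2^{1-n_k}$, so $g_k\to 0$ pointwise. With this modification --- which is how Sakai's proof actually runs --- your outline becomes a proof; without it, the hard direction is not established.
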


Now we are ready to prove Theorem \ref{t:Cp-Ascoli-k-FU}.
\begin{proof}[Proof of Theorem \ref{t:Cp-Ascoli-k-FU}]
(i) By Theorem \ref{t:Sakai-Cp-k-FU} we have to show that $X$ has property $(\kappa)$. Consider a sequence $\{ F_n:n\in\w\}$ of finite subsets of $X$ such that $F_n\cap F_m=\emptyset$ for all $n\neq m$. We need to find  an infinite $J\subset \w$ and open sets $U_j\supset F_j$ for all $j\in J$, such that $\{U_j:j\in J\}$ is point-finite.

Let $g_k$ be the constant $k$ function and denote by $O_k$ the set $[g_k,F_k,1/2]$. It is easy to see that $0\in\overline{\bigcup\{O_k:k>0\}}$. By Lemma~\ref{l:Cp-Ascoli-2}  there exists a compact $K\subset C_p(X)$ intersecting infinitely many of the $O_k$'s. Thus there exists an infinite $J\subset \w$ and for every $j\in J$ a function $h_j\in K\cap O_j$. Set $$U_j :=\{x\in X: h_j(x)>j-1/2\}\supset F_j,$$ and note that $\{ U_j\}_{j\in J}$ is point-finite. Indeed, if $x$ belongs to $U_j$ for all $j\in J'$, where  $J'\subseteq J$ is infinite, then $\{h_j(x):j\in J'\}$ is unbounded, which is impossible because $\{h_j:j\in J'\}\subset K$.

\smallskip

(ii) Suppose that $C_p(X)$ is not Ascoli and find a compact $\KK\subset \CC(C_p(X))$ and $\phi\in\KK$ such that the valuation map is discontinuous at $(0,\phi)\in C_p(X)\times\KK$. Without loss of generality we may assume that $\phi(0)=0$ whereas the set
\[
\{(h,\psi)\in \big(C_p(X)\setminus\{0\}\big) \times\KK : \psi(h)>1\}
\]
contains $(0,\phi)$ in the closure. Let $\{ \Op_n:n\in\w\}$ be a base of the topology of $\KK$ at $\phi$.
For every $n\in\w$, denote by $H_n$ the set of all nonzero functions $h$ for which there is $\psi_{n,h}\in\Op_n$ such that $\psi_{n,h}(h)>1$, and note that $0\in\overline{H_n}$.
Let $W_{n,h}\subset C_p(X)$ be an open neighbourhood of $h$ such that
\[
\overline{W_{n,h}}\subset C_p(X)\setminus\{0\}
\]
and $\psi_{n,h}(h')>1$ for all $h'\in W_{n,h}$. Set $\W_n=\{W_{n,h}:h\in H_n\}$ and  note that $0\in \overline{\bigcup\W_n} $ as $\bigcup\W_n\supset H_n$. Applying Lemma~\ref{l:Cp-Ascoli-1} we can find $h_n\in H_n$ such that $$0\in\overline{\bigcup\{W_{n,h_n}:n\in\w\}}.$$ Since $C_p(X)$ is $\kappa$-Fr\'echet--Urysohn there exists a convergent to $0$ sequence $\{ g_n:n\in\w\}$ such that $g_n\in W_{k_n,h_{k_n}}$ for some $k_n\in\w$.

Let $n_0$ be such that $\phi(g_n)<1/2$ for all $n\geq n_0$. Such an $n_0$ exists since $\phi$ is continuous and $\phi(0)=0$. Since $\{ \psi_{k_n,h_{k_n}}:n\in\w\}$ converges to $\phi$ in $\CC(C_p(X))$ and $\{g_n:n\in\w\}\cup\{0\}$ is a compact  subspace of $C_p(X)$,  there exists
$n_1\in\w$ such that
\[
\psi_{k_n,h_{k_n}}|_{ \{g_m:m\geq n_0\}\cup\{0\}} <1/2 \mbox{ for all } n\geq n_1.
\]
But this is impossible since $\psi_{k_n,h_{k_n}} (g_n)>1$ for all $n$, because  $g_n\in W_{k_n,h_{k_n}}$ and $\psi_{k_n,h_{k_n}} (h')>1$ for all  $h'\in W_{k_n,h_{k_n}}$.
\end{proof}

By  \cite[Theorem 3.2]{Sak2} every separable metrizable space $X$ with property $(\kappa)$ is always of the first category (i.e. every dense in itself subset $A$ of $X$ is of the first category in itself).
So, if $X$ is  a non-meager
separable metrizable space  without isolated points
 then $C_p(X)$ is not an Ascoli space.

Having in mind  Theorem \ref{t:Cp-Ascoli-k-FU} it is natural to ask the following
\begin{question}
Suppose that $C_p(X)$ is $\kappa$-Fr\'echet--Urysohn. Is it then Ascoli?
\end{question}

Next proposition complements Theorem 3.4 and Corollary 3.5 of \cite{Sak2}.
\begin{proposition} \label{p:Cech-complete-kappa-scat}
Let $X$ be a \v{C}ech-complete space. If $X$ is has property $(\kappa)$, then $X$ is scattered.
\end{proposition}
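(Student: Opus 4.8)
The plan is to prove the contrapositive: assuming $X$ is \v{C}ech-complete and \emph{not} scattered, I will locate a copy of the Cantor set inside $X$ and then contradict property $(\kappa)$ via the first-category result of Sakai. The first move is to pass to the perfect kernel. Since $X$ is not scattered, the Cantor--Bendixson derivation does not reach the empty set, so the perfect kernel $P$ (the intersection of all iterated derived sets, equivalently the largest dense-in-itself subset) is a nonempty \emph{closed} subspace of $X$ with no isolated points. Being closed in a \v{C}ech-complete space, $P$ is itself \v{C}ech-complete and Tychonoff, and by construction it is dense-in-itself.

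The central step, and the one place where \v{C}ech-completeness is genuinely used, is to embed $2^\omega$ into $P$. I would fix a complete sequence $\{\mathcal U_n:n\in\omega\}$ of open covers witnessing the \v{C}ech-completeness of $P$, and build by induction a Cantor scheme $\{V_s:s\in 2^{<\omega}\}$ of nonempty open subsets of $P$ so that $\overline{V_{s^\frown 0}}$ and $\overline{V_{s^\frown 1}}$ are disjoint subsets of $V_s$ and each $\overline{V_{s^\frown i}}$ is contained in some member of $\mathcal U_{|s|}$. The inductive step uses that $P$ has no isolated points, so each nonempty open $V_s$ contains two distinct points, which regularity lets me separate by open sets with disjoint closures; these are then shrunk to refine $\mathcal U_{|s|}$. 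For each branch $\alpha\in 2^\omega$ the decreasing chain $\{\overline{V_{\alpha|n}}\}$ is $\mathcal U_n$-small for every $n$, so completeness of the cover sequence gives $\bigcap_n V_{\alpha|n}\neq\emptyset$; selecting one point from each intersection defines a map $f\colon 2^\omega\to P$. Distinct branches end up in disjoint closures, so $f$ is injective, it is easily seen to be continuous, and since $2^\omega$ is compact it is an embedding. Thus $P$, and hence $X$, contains a subspace $K$ homeomorphic to $2^\omega$. The simultaneous bookkeeping needed to keep the branches separated and to refine the covers at each level is the main technical obstacle, though it is routine once the no-isolated-points hypothesis is available.

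To finish, I would use two elementary facts. First, property $(\kappa)$ is inherited by arbitrary subspaces: if $\{F_n\}$ is a pairwise disjoint sequence of finite subsets of a subspace $Y\subseteq X$, then a strongly point-finite subsequence in $X$, say with point-finite open envelopes $U_{n_k}\supseteq F_{n_k}$, produces the point-finite family $\{U_{n_k}\cap Y\}$ of open envelopes in $Y$; hence $\{F_{n_k}\}$ is strongly point-finite in $Y$. Consequently, if $X$ had property $(\kappa)$ then so would $K\cong 2^\omega$. But $2^\omega$ is a compact, hence Baire, separable metrizable space without isolated points, so it is not of the first category in itself; by \cite[Theorem~3.2]{Sak2} it therefore cannot have property $(\kappa)$. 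This contradiction shows that a \v{C}ech-complete space with property $(\kappa)$ is scattered. (Equivalently, one could avoid citing Sakai's first-category theorem by reading off, directly from the Cantor scheme, a pairwise disjoint sequence of finite sets in $K$ no subsequence of which admits a point-finite family of open envelopes; but the reduction above is the cleanest packaging.)
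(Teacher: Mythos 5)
Your reductions at either end are fine: the perfect kernel $P$ is indeed a nonempty closed (hence \v{C}ech-complete) dense-in-itself subspace, property $(\kappa)$ is hereditary exactly as you argue, and Sakai's first-category theorem does rule out property $(\kappa)$ for $2^\omega$. The fatal gap is the central claim that $P$ contains a copy of $2^\omega$. In your Cantor scheme the branch intersections $\bigcap_n\overline{V_{\alpha|n}}$ need not be singletons --- completeness of the cover sequence only guarantees they are nonempty --- so an arbitrary selection of one point per branch gives an injective map with no reason to be continuous: continuity at $\alpha$ would require $\overline{V_{\alpha|n}}\subseteq W$ for every neighbourhood $W$ of $f(\alpha)$ and some $n$, which (by the same completeness argument) forces the branch intersection to equal $\{f(\alpha)\}$. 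This is not a bookkeeping issue that more care would fix; the intermediate statement itself is false. Take $X=P=\omega^*=\beta\omega\setminus\omega$: it is compact (hence \v{C}ech-complete) and has no isolated points, yet it contains no nontrivial convergent sequence (it is a compact $F$-space), so no subspace of it is homeomorphic to $2^\omega$; moreover every nonempty $G_\delta$ subset of $\omega^*$ has nonempty interior, so branch intersections there are never singletons. Your argument is correct when $X$ is completely metrizable (shrinking diameters give continuity), but the proposition is needed, and is applied in the paper, for compact and general \v{C}ech-complete spaces --- indeed $\omega^*$ is exactly the kind of space the proposition must handle.

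The paper's proof avoids manufacturing any metrizable subspace inside $X$. It first reduces to showing that every compact $K\subseteq X$ is scattered (a fact about \v{C}ech-complete spaces from Tkachuk's book), then uses heredity of $(\kappa)$, and, in place of your Cantor set, takes a continuous \emph{irreducible} surjection $f\colon K\to[0,1]$, which exists whenever the compact set $K$ is not scattered. Irreducibility is the key: it transports density backwards, i.e.\ $f[A]$ dense in $[0,1]$ implies $A$ dense in $K$, so finite sets $A_n\subseteq K$ whose images hit prescribed basic open sets of $[0,1]$ have all their tails dense in $K$, and then the Baire category theorem applied in the compact space $K$ itself defeats point-finiteness of any sequence of open envelopes. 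Your closing parenthetical (reading finite sets off the scheme directly) runs into the same wall: without irreducibility you cannot pull density back from $2^\omega$ to the compact set $\bigcap_n\bigcup_{|s|=n}\overline{V_s}$, and that pull-back is precisely what the paper's argument supplies and yours lacks.
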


\begin{proof}
By Fact 1 on page 308 in \cite{Tkachuk-Book-1} it is enough to prove that any compact $K\subset X$ is scattered. Suppose for the contradiction that $X$ contains a non-scattered compact subset $K$. Since the property $(\kappa)$ is hereditary by \cite[Proposition 3.7]{Sak2}, to get a contradiction it is enough to show that $K$ does not have property $(\kappa)$. As $K$ is not scattered there exists a continuous surjective map $f:K\to [0,1]$, see \cite[Theorem~8.5.4]{sema}. By \cite[Exercise~3.1.C(a)]{Eng}, passing to the restriction of $f$ to some compact subspace of $K$ if necessary (this is possible because the property $(\kappa)$ is hereditary),  we may additionally  assume that $f$ is irreducible, i.e., $f[K']\neq [0,1]$ for any closed $K'\subsetneq K$. It follows that for any $A\subset K$, if $f[A]$ is dense in $[0,1]$, then $A$ is dense in $K$ because  $\overline{f[A]}=[0,1]$.

Let $\{B_n:n\in\w\}$ be a base of the topology of $[0,1]$. Since every $B_n$ is infinite we can choose a disjoint sequence $\{ F_n:n\in\w\}$ of finite subsets of $[0,1]$ such that $F_n\cap B_k\neq \emptyset$ for all $k\leq n$. Note that $\bigcup_{n\in I}F_n$ is dense in $[0,1]$ for every infinite subset $I$ of $\w$.
For every $n\in\w$ take a finite subset $A_n$ of $K$ such that $f[A_n]=F_n$. It follows from the above that $\bigcup_{n\in I}A_n$ is dense in $K$ for every infinite $I\subseteq \w$. We show that the sequence $\{ F_n: n\in\w\}$ does not have a strongly point-finite subsequence.
Let $I\subseteq\w$ be infinite and a sequence $\UU=\{ U_i: i\in I\}$ of open subsets of $K$ be such that $A_i\subseteq U_i$ for any $i\in I$. Then
\[
\bigcap_{m\in\w} \bigcup_{i\in I,i\geq m}U_i\neq \emptyset
\]
by the Baire theorem because $\bigcup_{i\in I,i\geq m}U_i$ is open and dense in $K$ for all $m$. So $\UU$ is not point-finite. Thus $K$ does not have property $(\kappa)$.
\end{proof}

Let $X=\prod_{ t\in T} X_t$ be the product of an infinite family of topological spaces. For $x=(x_t)$ and $y=(y_t)$ in $X$, we set $\delta(x,y):=\{ t: x_t\not= y_t\}$ and
\begin{equation} \label{equ-sigma}
\Sigma(x):= \{ y\in X: \delta(x,y) \mbox{ is countable}\} \mbox{ and } \sigma(x):= \{ y\in X: \delta(x,y) \mbox{ is finite}\}.
\end{equation}
If each $X_t$ is considered with a structure of a linear topological space,
then we standardly mean by  $\sigma_{t\in T} X_t :=\sigma(0)$ the  $\sigma$-product with
respect to the identity $0=0_X :=(0_t)\in X$. If $x\in \Sigma(z)$ we set
$\supp(x):=\{t\in T: x_t\not= z_t\}$, so $\supp(x)$ is a countable subset of $T$.
Subspaces of $\prod_{ t\in T} X_t$ of the form $\Sigma(x)$, where $x\in \prod_{ t\in T} X_t$,
are called \emph{$\Sigma$-subspaces.}

The following (probably folklore) statement generalizes a result of Noble \cite{Nob}. We give its proof for the sake of completeness.
\begin{proposition} \label{p:Noble-Sigma-product}
Let $\{X_i:i\in I\}$ be a family of topological spaces such that $X=\prod_{i\in I'}X_i$ is Fr\'echet--Urysohn for any countable subset $I'$ of $I$. Then $\Sigma(z)$ and hence also $\sigma(z)$ are Fr\'echet--Urysohn for every $z\in \prod_{ i\in I} X_i$.
In particular, each  $\Sigma$-subspace of a product of first countable spaces is a Fr\'{e}chet--Urysohn space.
\end{proposition}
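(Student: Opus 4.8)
The plan is to first reduce to the case of $\Sigma(z)$: since the Fr\'echet--Urysohn property is hereditary and $\sigma(z)\subseteq\Sigma(z)$, it suffices to show that $\Sigma(z)$ is Fr\'echet--Urysohn. Throughout I will use two standard facts about the product $X=\prod_{i\in I}X_i$ and the subspace topology on $\Sigma(z)$: a sequence in $\Sigma(z)$ converges to a point $x\in\Sigma(z)$ if and only if it converges coordinatewise, and a point $x$ lies in $\overline A$ if and only if $\pi_F(x)\in\overline{\pi_F(A)}$ for every finite $F\subseteq I$, where $\pi_F$ denotes the projection. So fix $A\subseteq\Sigma(z)$ and $x\in\overline A\cap\Sigma(z)$ (closure in $\Sigma(z)$, equivalently in $X$); the goal is to produce a sequence in $A$ converging to $x$.

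The heart of the argument, and the step I expect to be the main obstacle, is a coordinate book-keeping construction yielding a single countable coordinate set $C$ that simultaneously captures $x$ and a suitable countable piece of $A$. I would build, by recursion on $n$, an increasing chain of countable sets $C_n\subseteq I$ and countable subfamilies $A_n\subseteq A$ as follows. Set $C_0:=\supp(x)$, which is countable since $x\in\Sigma(z)$. Given the countable set $C_n$, the subproduct $\prod_{i\in C_n}X_i$ is Fr\'echet--Urysohn by hypothesis, and $\pi_{C_n}(x)\in\overline{\pi_{C_n}(A)}$ because projections are continuous; hence there is a sequence in $\pi_{C_n}(A)$ converging to $\pi_{C_n}(x)$. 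Lifting its terms back to points of $A$ produces a countable $A_n\subseteq A$ whose image $\pi_{C_n}(A_n)$ contains such a convergent sequence, and I then enlarge the coordinate set to $C_{n+1}:=C_n\cup\bigcup\{\supp(a):a\in A_n\}$, again countable. Put $C:=\bigcup_n C_n$ and $A':=\bigcup_n A_n$. By construction $C$ is countable, $\supp(x)\subseteq C$, and every $a\in A'$ satisfies $\supp(a)\subseteq C$.

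Then I claim $\pi_C(x)\in\overline{\pi_C(A')}$ in $\prod_{i\in C}X_i$: any basic neighbourhood of $\pi_C(x)$ constrains only finitely many coordinates $F\subseteq C$, and since the $C_n$ increase to $C$ we have $F\subseteq C_n$ for some $n$, whereupon the sequence chosen at stage $n$ has a tail inside that neighbourhood, witnessing the closure. As $C$ is countable, $\prod_{i\in C}X_i$ is Fr\'echet--Urysohn, so I may extract a sequence $(a^m)_m$ in $A'$ with $\pi_C(a^m)\to\pi_C(x)$. It remains to verify that $a^m\to x$ coordinatewise in $X$: for $i\in C$ this is immediate from $\pi_C(a^m)\to\pi_C(x)$, while for $i\notin C$ both $\pi_i(a^m)=z_i$ (since $\supp(a^m)\subseteq C$) and $x_i=z_i$ (since $\supp(x)\subseteq C$), so the $i$-th coordinates are always equal. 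Hence $a^m\to x$ in $\Sigma(z)$ with $a^m\in A'\subseteq A$, which establishes the Fr\'echet--Urysohn property of $\Sigma(z)$. Finally, for the ``in particular'' clause I would note that a countable product of first countable spaces is first countable, hence Fr\'echet--Urysohn, so the hypothesis is automatically met for products of first countable spaces.
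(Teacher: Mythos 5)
Your proof is correct and takes essentially the same route as the paper's: both recursively build an increasing chain of countable coordinate sets ($C_n$, resp.\ $I_n$) together with countable subsets $A_n\subseteq A$, using the Fr\'echet--Urysohn property of countable subproducts at each stage, then apply that property once more over $C=\bigcup_n C_n$ and use the support containments $\supp(x)\subseteq C$ and $\supp(a)\subseteq C$ for $a\in A'$ to upgrade convergence of the projections to convergence in $\Sigma(z)$. The only cosmetic difference is that you verify $\pi_C(x)\in\overline{\pi_C(A')}$ directly in the countable subproduct, while the paper first establishes $x_*\in\overline{A_\omega}$ in the full product; the two steps are interchangeable.
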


\begin{proof}
Let $Z=\Sigma(z)$ be a $\Sigma$-subspace of $X=\prod_{ i\in I} X_i$. Fix $x_*\in Z$ and $A\subset Z$ such that $x_*\in\bar{A}$. Set $I_0:=\supp(x_*)$. Then $$\mathit{pr}_{I_0}(x_*)\in\overline{\mathit{pr}_{I_0}(A)},$$ and since $\prod_{i\in I_0} X_i$ is Fr\'echet--Urysohn, we can find a sequence $A_0 \subseteq A$ such that $$\mathit{pr}_{I_0}(x_*)\in\overline{\mathit{pr}_{I_0}(A_0)}.$$ Set $$I_1 :=I_0 \cup \bigcup_{x\in A_0} \supp(x).$$ Repeating the above arguments by induction on $n\in\w$, we construct countable sets $I_n\subset I$ and $A_n\subset A$ with the following properties for all $n\in\w$:
\begin{itemize}
 \item[(a)]   $I_n\subset I_{n+1}$, and $A_n\subset A_{n+1}$;
 \item[(b)] $\mathit{pr}_{I_n}(x_*)\in\overline{\mathit{pr}_{I_n}[A_n]}$; and
 \item[(c)] $\bigcup\{\supp(x):x\in A_n\}\subset I_{n+1}$.
 \end{itemize}

We claim that $x_*\in\overline{A_\w}$, where $A_\w=\bigcup_{n\in\w}A_n$. Indeed, fix a finite $F\subset I$ and open $U_i\ni x_*(i)$ for all $i\in F$. Set $$I_\w=\bigcup_{n\in\w} I_n$$ and find $n_0$ such that $$F\cap I_\w=F\cap I_{n_0}.$$ By our choice of $A_{n_0}$ there exists $x\in A_{n_0}$ such that $x(i)\in U_i$ for all $i\in F\cap I_\w$. Moreover, for $i\in F\setminus I_\w$ we have that $x_*(i)=x(i)=z(i)$ because $\supp(x_*)\subset I_0$ and $\supp(x)\subset I_{n_0+1}$, and hence $x(i)\in U_i$ for all $i\in F$, which yields  $x_*\in\overline{A_\w}$.

The space $\prod_{i\in I_\w} X_i$ is Fr\'{e}chet--Urysohn, and therefore there exists a sequence $\{ x_n:n\in\w\}$ of elements of $A_\w$ such that the sequence $\{\mathit{pr}_{I_\w}(x_n):n\in\w\}$ converges to $\mathit{pr}_{I_\w}(x_*)$ in $\prod_{i\in I_\w}X_i$. Since $x_n(i)=z(i)$ for all $n\in\w$ and $i\in I\setminus I_\w$, we conclude that  $x_n \to x_*$ in $Z$. Thus $Z$ is a Fr\'{e}chet--Urysohn space.
\end{proof}

In what follows we need the following consequence  of  \cite[Proposition 5.10]{BG}.
\begin{lemma} \label{p:Ascoli-dense-subgroup}
Let $Y$ be a dense subset of a homogeneous space (in particular, a topological group) $X$. If $Y$ is an Ascoli space, then $X$ is also an Ascoli space.
\end{lemma}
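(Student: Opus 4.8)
The plan is to reduce the Ascoli property of $X$ to \emph{even continuity at a single point} and then exploit homogeneity. Recall that $X$ is Ascoli precisely when, for every compact $\KK\subseteq\CC(X)$, the evaluation map $e_X\colon X\times\KK\to\mathbb{R}$ is continuous; since joint continuity is checked pointwise, it suffices to verify that $e_X$ is continuous at every pair $(x_0,f_0)$. Call $X$ \emph{Ascoli at $x_0$} if this holds at $x_0$ for all $\KK$ and all $f_0\in\KK$. A homeomorphism $h\colon X\to X$ induces a homeomorphism $f\mapsto f\circ h$ of $\CC(X)$ carrying compact sets to compact sets, and under $h\times h^{*}$ the pair $(x_0,f_0)$ corresponds to $(h^{-1}(x_0),f_0\circ h)$; hence being Ascoli at a point is a homeomorphism invariant, and by homogeneity $X$ is Ascoli as soon as it is Ascoli at one point. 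This localization is the content I draw from \cite[Proposition~5.10]{BG}. Since $Y$ is dense it is nonempty, so I fix $y_0\in Y$ and aim to show that $X$ is Ascoli at $y_0$.

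First I would transport the hypothesis through the restriction map $r\colon\CC(X)\to\CC(Y)$, $f\mapsto f|_Y$. Because every compact subset of $Y$ is compact in $X$, $r$ is continuous, and because $Y$ is dense, $r$ is injective; thus for a fixed compact $\KK\subseteq\CC(X)$ the image $r(\KK)$ is compact in $\CC(Y)$ and $r|_{\KK}\colon\KK\to r(\KK)$ is a homeomorphism (a continuous bijection from a compact space onto the Hausdorff space $\CC(Y)$). Since $Y$ is Ascoli, $r(\KK)$ is evenly continuous, i.e. the evaluation $e_Y\colon Y\times r(\KK)\to\mathbb{R}$ is continuous. Transporting along the homeomorphism $r|_{\KK}$, this says exactly that $(y,f)\mapsto f(y)$ is continuous on $Y\times\KK$.

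The heart of the argument is to upgrade continuity of the evaluation on the dense slice $Y\times\KK$ to continuity at the points $\{y_0\}\times\KK$ of $X\times\KK$, using that each $f$ is continuous on all of $X$. Fix $f_0\in\KK$ and $\e>0$. Continuity of $e_Y$ at $(y_0,f_0|_Y)$ provides an open $U\ni y_0$ in $X$ and an open $\mathcal V\ni f_0$ in $\KK$ (pulled back through $r|_{\KK}$) such that $|f(y)-f_0(y_0)|<\e/2$ for all $y\in U\cap Y$ and $f\in\mathcal V$. Now fix $f\in\mathcal V$: the set $\{x\in U:|f(x)-f_0(y_0)|\le\e/2\}$ is closed in $U$ and contains the dense subset $U\cap Y$, hence equals $U$. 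Therefore $|f(x)-f_0(y_0)|\le\e/2<\e$ for all $x\in U$ and $f\in\mathcal V$, which is precisely continuity of $e_X$ at $(y_0,f_0)$. As $f_0\in\KK$ and $\KK$ were arbitrary, $X$ is Ascoli at $y_0$, and the homogeneity reduction of the first paragraph then gives that $X$ is Ascoli.

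The step I expect to be the main obstacle is the passage from $Y$ to $X\setminus Y$: even continuity of $r(\KK)$ only delivers the evaluation estimate \emph{centered at points of $Y$}, and the neighborhoods it produces surround those points rather than an arbitrary $x_0\in X\setminus Y$. A naive density argument cannot recenter such a neighborhood at $x_0$, so homogeneity is genuinely needed and not merely cosmetic. The only other delicate point is the closure argument above, where one must first pass to a nonstrict inequality on a closed set and then recover strictness by shrinking $\e$; this is why I phrase the estimate with $\e/2$ from the outset. The final assertion of the lemma is immediate, since every topological group is homogeneous.
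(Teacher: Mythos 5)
Your proof is correct, but it takes a genuinely different route from the paper's. The paper's argument is two lines long: fix $y_0\in Y$; for each $x\in X$ take a homeomorphism $h$ of $X$ with $h(y_0)=x$, so that $h(Y)$ is a dense Ascoli subspace of $X$ containing $x$; then invoke Proposition~5.10 of \cite{BG}, which states precisely that a space each of whose points lies in a dense Ascoli subspace is Ascoli. So the paper uses homogeneity to move the dense subspace onto every point and delegates the entire dense-subspace-to-ambient-space upgrade to the cited proposition. You do the opposite: you keep $Y$ fixed, prove the upgrade yourself --- the restriction map $r$ is a homeomorphism of a compact $\KK\subset\CC(X)$ onto $r(\KK)\subset\CC(Y)$, even continuity of $r(\KK)$ gives the evaluation estimate on $(U\cap Y)\times\mathcal{V}$, and the closed-set/density argument extends it to $U\times\mathcal{V}$, i.e., continuity of the evaluation at every point of $Y\times\KK$ --- and then use homogeneity to move the point of evaluation rather than the subspace. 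What your route buys is self-containedness: the only input is the definition of the Ascoli property, and you effectively re-prove the special case of \cite[Proposition~5.10]{BG} that the paper uses as a black box; what the paper's route buys is brevity. One inaccuracy worth flagging: the ``localization'' you attribute to \cite[Proposition~5.10]{BG} (that being Ascoli at a point is homeomorphism-invariant and that pointwise Ascoli everywhere equals Ascoli) is not what that proposition says --- it is the trivial part of your argument and needs no citation, whereas the nontrivial content of that proposition is exactly the dense-subspace upgrade you prove by hand. The misattribution costs nothing: as written, your proof needs no appeal to \cite{BG} at all.
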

\begin{proof}
Fix arbitrarily $y_0\in Y$. Let $x\in X$. Take a homeomorphism $h$ of $X$ such that $h(y_0)=x$. Then $x\in h(Y)$ and $h(Y)$ is an Ascoli space. So each element of $X$ is contained in a dense Ascoli subspace of $X$.  Thus $X$ is an Ascoli space by Proposition 5.10 of \cite{BG}.
\end{proof}

Let us recall several definitions. For a scattered space $X$ one of the most efficient methods to analyze its structure is the Cantor--Bendixson procedure described below. Set $X^{(0)}:=X$,
\[
X^{(\gamma+1)}:=X^{(\gamma)}\setminus\mathit{Iso}(X^{(\gamma)})
\]
(where by $\mathit{Iso}(Z)$ we denote the set of all isolated points of a space $Z$), and $$X^{(\gamma)}:=\bigcap_{\alpha<\gamma}X^{(\alpha)}$$ for limit ordinals $\gamma$. It is easy to see that $X$ is scattered if and only if $X^{(\gamma)}=\emptyset$ for some ordinal $\gamma$. If $X$ is scattered, for $x\in X$ we denote by $d(x)$ the (unique) $\alpha$ such that $x\in X^{(\alpha)}\setminus X^{(\alpha+1)}$.

A space $X$ is  called \emph{ultraparacompact} \cite{RudWat83} if any open cover has a clopen disjoint refinement. It has been shown by Telgarsky in \cite{Tel68} that a scattered paracompact space is zero-dimensional and ultraparacompact, see also \cite{RudWat83} for generalizations.

\begin{proposition} \label{p:Cp-Ascoli-czech-complete-paracomp}
Assume that a paracompact scattered space $X$ has the following property:
\begin{itemize}
 \item[$(\star)$]
Each $x\in X$ has a clopen neighborhood $O(x)$ such that for any clopen $U$, $x\in U\subset O(x)$,  there exists a compact $C$, $x\in C\subset U$, for which the difference $U\setminus C$ is paracompact, and there exists a continuous linear operator $\psi:C_p(C)\to C_p(U)$ such that $\psi(f)|_C=f$ for all $f\in C_p(C)$.
\end{itemize}
 Then $C_p(X)$ is Ascoli.
\end{proposition}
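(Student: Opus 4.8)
The plan is to prove the proposition by transfinite induction on the Cantor--Bendixson height of $X$, i.e. on the least ordinal $\gamma$ with $X^{(\gamma)}=\emptyset$, using property $(\star)$ and its extension operators to reduce the Ascoli property of $C_p(X)$ to that of $C_p$ of strictly lower-rank pieces together with $C_p$ of the compact ``cores''. The guiding idea is to realize $C_p(X)$ as a product of well-understood factors and then to transfer the Ascoli property from a dense $\Sigma$-subproduct. Throughout I would use that, by Telgarsky \cite{Tel68}, a scattered paracompact space is zero-dimensional and ultraparacompact, so that the cover of $X$ by the neighborhoods $O(x)$ of $(\star)$ admits a partition into clopen sets; this is exactly the feature of paracompactness I would lean on.

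First I would peel the top level and split each resulting piece. If $\gamma=\beta+1$, the top layer $X^{(\beta)}$ is closed and discrete, so ultraparacompactness lets me separate its points by a pairwise disjoint clopen family $\{U_p:p\in X^{(\beta)}\}$ with $U_p\subset O(p)$ and $U_p\cap X^{(\beta)}=\{p\}$; putting $R:=X\setminus\bigoplus_p U_p$ gives a clopen decomposition $X=\big(\bigoplus_p U_p\big)\oplus R$, hence $C_p(X)=\prod_p C_p(U_p)\times C_p(R)$, with $R$ of height $\le\beta$ (limit $\gamma$ being treated by the same separation applied to an increasing clopen exhaustion). On a single $U=U_p$ I would invoke $(\star)$ to obtain a compact $C\ni p$ with $U\setminus C$ paracompact (again scattered, of height $\le\beta$) and a continuous linear section $\psi$ of the restriction $r\colon C_p(U)\to C_p(C)$, $r(h)=h|_C$; since $U$ is clopen, hence normal, $r$ is onto by Tietze, and $h\mapsto(r(h),h-\psi(r(h)))$ is then a linear homeomorphism $C_p(U)\cong C_p(C)\times\ker r$. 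The first factor is Fr\'echet--Urysohn by Corollary \ref{c:compact-Ascoli-scat} because $C$ is compact scattered, while restriction to $U\setminus C$ carries the closed subspace $\ker r=\{g:g|_C=0\}$ (linearly and topologically, as on $\ker r$ the $U$- and $(U\setminus C)$-pointwise topologies coincide) into the inductively Ascoli space $C_p(U\setminus C)$.

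The hard part will be the assembly, since neither the Ascoli property nor Fr\'echet--Urysohn is productive and Ascoli is not even hereditary. Here I would use the two structural lemmas provided: the $\sigma$- and $\Sigma$-subproducts are dense in the full product $\prod_p C_p(U_p)\times C_p(R)$, which is a topological group, so by Lemma \ref{p:Ascoli-dense-subgroup} it suffices to exhibit a dense Ascoli $\Sigma$-subspace; and by Proposition \ref{p:Noble-Sigma-product} such a $\Sigma$-subspace is Fr\'echet--Urysohn, hence Ascoli, as soon as all its countable subproducts are Fr\'echet--Urysohn. The crux therefore becomes the statement that a countable product of the building blocks $C_p(C)$ (and of the inductive kernels) is Fr\'echet--Urysohn, equivalently that a countable clopen sum of compact scattered cores carries a Fr\'echet--Urysohn $C_p$ --- and this is precisely where the compactness guaranteed by $(\star)$ is decisive, since in the intended applications (stratifiable $X$) the cores $C$ are metrizable and hence countable, so each $C_p(C)$ is metrizable and countable products remain metrizable and Fr\'echet--Urysohn. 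I expect this countable-product step, together with the bookkeeping that the kernels $\ker r$ and the remainder $R$ genuinely have smaller Cantor--Bendixson rank and again satisfy $(\star)$ (so that the induction hypothesis truly applies to them), to be the main obstacle; by contrast, the algebra of the splitting $C_p(U)\cong C_p(C)\times\ker r$ and the clopen decompositions of the first step are routine.
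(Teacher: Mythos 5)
Your overall architecture---transfinite induction along the Cantor--Bendixson hierarchy, Telg\'arsky's theorem to produce disjoint clopen decompositions, the $(\star)$-splitting $C_p(U)\cong C_p(C)\times\ker r$ via the extension operator, and a final assembly through Proposition~\ref{p:Noble-Sigma-product} and Lemma~\ref{p:Ascoli-dense-subgroup}---is the same as the paper's (the paper inducts on the rank $d(x)$ of individual points rather than on the height of $X$, but that is cosmetic). The genuine gap is your choice of inductive invariant: you carry ``$C_p$ of the lower-rank pieces is Ascoli,'' and that invariant cannot be assembled. Concretely: (a) $\ker r$ is in general only a \emph{proper} linear subspace of $C_p(U\setminus C)$ (a continuous function on $U\setminus C$ need not extend continuously by $0$ across $C$; for $U=\w+1$, $C=\{\w\}$ the kernel is the pointwise-topologized $c_0$ sitting properly inside $\IR^{\w}$), and the Ascoli property is not hereditary, so the inductive hypothesis on $C_p(U\setminus C)$ tells you nothing about $\ker r$; (b) Proposition~\ref{p:Noble-Sigma-product} requires every \emph{countable} subproduct to be Fr\'echet--Urysohn, and ``Ascoli'' is strictly weaker than that for $C_p$-spaces---Example~\ref{exa:Cp-Ascoli-non-k-space} gives an Ascoli $C_p(X)$ that is not Fr\'echet--Urysohn---and neither property is productive, so the assembly step has nothing to run on; (c) your repair of the countable-product step, namely that the cores $C$ are metrizable and hence countable ``in the intended applications,'' proves only a special case: $(\star)$ permits arbitrary compact (scattered) cores, e.g.\ uncountable ordinal intervals, for which $C_p(C)$ is not metrizable. (Also, citing Corollary~\ref{c:compact-Ascoli-scat} for Fr\'echet--Urysohnness of $C_p(C)$ is formally circular, since that corollary is deduced from Theorem~\ref{t:Cech-complete-Ascoli-scat}, which rests on this very proposition; the non-circular source is \cite[Theorem~II.7.16]{Arhangel}.)

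The paper resolves exactly this difficulty by strengthening the inductive statement to the property $(\dagger)$: $C_p(U)$ embeds linearly and topologically into a product $\prod_{K\in\KK}C_p(K)$ over \emph{compact scattered} subsets $K\subset U$, with image containing the $\sigma$-product $\sigma_{K\in\KK}C_p(K)$. Unlike ``Ascoli,'' this invariant composes: a disjoint clopen decomposition turns $C_p(U)$ into the product of the corresponding embeddings, and the $(\star)$-splitting adds the core $C$ as one more factor via
\[
f\mapsto\Big(f|_C,\ \big((f-\psi(f|_C))|_V\big)_{V}\Big),
\]
followed by the inductively given embeddings on the clopen pieces $V$ of $U\setminus C$ (injectivity, the $\sigma$-product containment, and continuity of the inverse are then checked by hand). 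Fr\'echet--Urysohnness enters only once, at the very end: for countable $\KK'\subset\KK$ one has $\prod_{K\in\KK'}C_p(K)=C_p(\oplus\KK')$ with $\oplus\KK'$ Lindel\"of and scattered, which is Fr\'echet--Urysohn by \cite[Theorem~II.7.16]{Arhangel}---this theorem, not metrizability of the cores, is the missing tool in your sketch. Then Proposition~\ref{p:Noble-Sigma-product} makes the dense $\sigma$-product Fr\'echet--Urysohn, and Lemma~\ref{p:Ascoli-dense-subgroup} concludes that $C_p(X)$ is Ascoli. If you replace your invariant by $(\dagger)$ and invoke Arhangel'skii's theorem for the countable subproducts, your outline becomes essentially the paper's proof.
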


\begin{proof}
Note that if $x$ and its clopen neighborhood $O(x)$ satisfy $(\star)$, then for every clopen neighborhood $V$ of $x$ with $V\subseteq O(x)$ the pair $x,V$ satisfies $(\star)$. The following claim is the central part of the proof.

\begin{claim} \label{claim:Cp-scattered}
Let $X$ be a  paracompact scattered space with property $(\star)$. Then for every $x\in X$ there exists a clopen neighbourhood $O(x)$ of $x$ with the following property:
\begin{itemize}
 \item[$(\dagger)$] For any clopen $U\subset O(x)$ there exists a family $\KK=\KK_{U,x}$ of scattered compact
 subsets of $U$ such that $C_p(U)$ is linearly homeomorphic to a linear subspace of $\prod_{K\in\KK}C_p(K)$ containing $\sigma_{K\in\KK}C_p(K)$.
\end{itemize}
\end{claim}

\begin{proof}
The proof will be by transfinite induction on $d(x)$. If $d(x)=0$, then $x\in \mathit{Iso}(X)$. Set $O(x):=\{x\}$ and $\KK :=\big\{\{x\}\big\}$. Clearly, $O(x)$ and $\KK$ are as required. Assuming that the claim is true for all $x\in X$ with $d(x)<\alpha$, let us fix $x\in X$ with $d(x)=\alpha$ and find a clopen neighborhood $O(x)\subseteq X$ of $x$ such that
\[
O(x)\subseteq \{y\in X:d(y)< \alpha\} \cup\{x\}.
\]
We claim that $O(x)$ is as required. Indeed, let us fix a clopen $U\subseteq O(x)$. Two cases are possible.

{\em Case 1. Assume that $x\in U$.} Since $X$ has property $(\star)$, there exists a compact $C\ni x$ such that  $C\subset U$ and $U\setminus C$ is
paracompact and  there exists a continuous linear operator $\psi:C_p(C)\to C_p(U)$ such that $\psi(f)|_C=f$ for all $f\in C_p(C)$, so $\psi(0)=0$.  For every $y\in U\setminus C$, set $$V_0(y)=O(y)\cap (U\setminus C).$$ Then $\V_0=\{V_0(y):y\in U\setminus C\}$ is an open cover of a paracompact scattered space $U\setminus C$. Thus there exists  \cite{Tel68} a clopen cover $\V$ of $U\setminus C$ whose elements are mutually disjoint, and such that $\V\prec\V_0$, i.e., for every $V\in\V$ there exists $V'\in\V_0$ with the property $V\subset V'$. It follows from the above that each $V\in\V$ has property $(\dagger)$, and hence there exists a family $\KK_{V}$ of scattered compact subsets of $V$ such that $C_p(V)$ can be topologically embedded into
$\prod_{K\in\KK_{V}}C_p(K)$ via a linear continuous map
\[
\phi_{V}:C_p(V)\to \prod_{K\in\KK_{V}}C_p(K)
\]
 such that
 \[
\sigma_{K\in\KK_{V}}C_p(K) \subset   \phi_{V}[C_p(V)].
\]
 Set
\[
\KK=\bigcup \{\KK_{V}: V\in\V\}\cup\{C\},
\]
so $\KK$ is a family of scattered compact subsets of $U$. Define a continuous linear operator $\phi:C_p(U)\to\prod\{C_p(K):K\in \KK\}$  as follows: if $f\in C_p(U)$, then
\begin{equation} \label{equ:Cp-Ascoli-Claim-0}
\phi(f)(C)=f|_C;
\end{equation}
and if $K\in \KK_V$ for the unique $V\in\V$ such that $K\in\KK_V$, then
\begin{equation} \label{equ:Cp-Ascoli-Claim-1}
\phi(f)(K)=\phi_{V}\big((f-\psi(f|_C))|_V\big)(K).
\end{equation}
In (i)-(iii) below we prove that $\phi$ and $\KK$ satisfy $(\dagger)$.

\smallskip
{\em (i) We show that $\sigma_{K\in\KK}C_p(K)\subset \phi[C_p(U)]$.} Fix a finite $\KK'\subset \KK$ and
\[
(f_K)_{K\in\KK'}\in\prod_{K\in\KK'}C_p(K).
 \]
There is no loss of generality to assume that $C\in \KK'$, because otherwise we may consider $\KK''=\KK' \cup\{C\}$ and set $f_C =0$. For every $K\in\KK'\setminus\{C\}$ find (the unique) $V_K\in\V$ such that $K\in\KK_{V_K}$. For every $V\in\{V_K:K\in\KK'\}$ find $f_V\in C_p(V)$ such that for each $K\in\KK'$ with $V_K=V$ it follows that
\begin{equation} \label{equ:Cp-Ascoli-Claim-10}
\phi_V(f_V)(K)=f_K.
\end{equation}
Such an $f_V$ exists by our assumptions on $\phi_V$. Set
\[
U':=U\setminus\bigcup\{V_K:K\in\KK'\setminus\{C\}\},
\]
so $U'$ is a clopen subset of $X$ containing $C$. Define $f\in C_p(U)$ by
\begin{equation} \label{equ:Cp-Ascoli-Claim-2}
f(x) := \left\{
\begin{aligned}
\psi(f_C)(x) \quad \quad , & \mbox{ if } x \in U',\\
\psi(f_C)(x) + f_{V_K}(x), & \mbox{ if } x\in V_K \mbox{ and } K\in \KK'\setminus\{C\}.
\end{aligned}
\right.
\end{equation}
We claim that $\phi(f)(K)$ equals $f_K$ for $K\in\KK'$ and $0$ otherwise, that proves (i). Indeed, fix $K\in\KK'$. If $K=C\subseteq U'$, then
\[
\phi(f)(C) \stackrel{(\ref{equ:Cp-Ascoli-Claim-0})}{=} f|_C \stackrel{(\ref{equ:Cp-Ascoli-Claim-2})}{=}  \psi(f_C)|_C=f_C.
\]
If $K\in\KK'\setminus\{C\}$, then
\[
\begin{split}
\phi(f)(K) & \stackrel{(\ref{equ:Cp-Ascoli-Claim-1})}{=}  \phi_{V_K}\big((f-\psi(f|_C))|_{V_K}\big)(K) = \phi_{V_K}\big(f|_{V_K}-\psi(f_C)|_{V_K}\big)(K)\\
& \stackrel{(\ref{equ:Cp-Ascoli-Claim-2})}{=}  \phi_{V_K}\big((\psi(f_C)|_{V_K} + f_{V_K})-\psi(f_C)|_{V_K}\big)(K)= \phi_{V_K}\big(f_{V_K}\big)(K) \stackrel{(\ref{equ:Cp-Ascoli-Claim-10})}{=} f_K.
\end{split}
\]
Finally, if $K\in\KK\setminus \KK'$, then $K\subseteq U'$ and
\[
\begin{split}
\phi(f)(K) & \stackrel{(\ref{equ:Cp-Ascoli-Claim-1})}{=}  \phi_{V_K}\big((f-\psi(f|_C))|_{V_K}\big)(K) = \phi_{V_K}\big(f|_{V_K}-\psi(f_C)|_{V_K}\big)(K)\\
& \stackrel{(\ref{equ:Cp-Ascoli-Claim-2})}{=}  \phi_{V_K}\big(\psi(f_C)|_{V_K} -\psi(f_C)|_{V_K}\big)(K)= \phi_{V_K}\big(0\big)(K)=0.
\end{split}
\]

\smallskip
{\em (ii) Let us prove that $\phi$ is injective.} Assume that $\phi(f)=\phi(g)$. Set  $h:=\phi(f)(C)= f|_C= g|_C$. Given any $V\in\V$ and $K\in\KK_V$, the equality $\phi(f)(K)=\phi(g)(K)$ and (\ref{equ:Cp-Ascoli-Claim-1}) imply
\[
\phi_{V}\big((f-\psi(h))|_V\big)(K)= \phi_{V}\big((g-\psi(h))|_V\big)(K),
\]
and hence $(f-\psi(h))|_V=(g-\psi(h))|_V$ by the injectivity of $\phi_V$. Consequently, $f|_V= g|_V$, and therefore $f=g$ because $V\in\V$ was chosen
arbitrarily.

\smallskip
{\em (iii) We show that $\phi^{-1}:\phi[C_p(U)]\to C_p(U)$ is continuous.} Fix a finite subset $F$ of $U$ and $\e>0$. Passing to a larger  $F$ if necessary we may assume that $F=F_C\cup\bigcup\{F_i:i\leq n\}$, where $F_C\in [C]^{<\w}$ and $F_i\in [V_i]^{<\w}$ for some $V_i\in \V$ such that $V_i\neq V_j$ for $i\neq j$. We need to find an open neighbourhood $W$ of
\[
(0_K)\in\prod_{K\in\KK}C_p(K)
\]
 such that $f\in [0,F,\varepsilon]$ whenever $\phi(f)\in W$. Let $A_C\in [C]^{<\w}$  and $\delta >0$ be such that $F_C\subset A_C$, $\delta<\e$, and
\[
 \psi[0, A_C,\delta]\subset [0,F,\e/2].
\]
(Here of course $[0, A_C,\delta]$ and $[0,F,\e/2]$ are considered as subsets of $C_p(C)$ and $C_p(U)$, respectively). Since $\phi_{V_i}$ is an embedding, there exists an open neighbourhood $W_i$ of
\[
(0_K)\in \prod_{K\in\KK_{V_i}}C_p(K)
\]
 such that $h\in C_p(V_i)$ lies in $[0,F_i,\e/2]$ whenever $\phi_{V_i}(h)\in W_i$. Consider
\[
W=W_C\times\prod_{V\in\V} W_V
\]
such that
\[
W_C=[0, A_C,\delta]\subset C_p(C),\,\,\,W_{V_i}=W_i\subset\prod_{K\in\KK_{V_i}}C_p(K),
\]
and $W_{V}=\prod_{K\in\KK_{V}}C_p(K)$ for $V\not\in\{V_i:i\leq n\}$. Assume that $\phi(f)\in W$ for some $f\in C_p(U)$. Then
\[
\phi(f)(C)=f|_C\in  [0,A_C,\delta]\subset [0,F_C,\e],
\]
and hence $\psi(f|_C)|_{V_i}\in [0,F_i,\e/2]$ for all $i\leq n$. Fix $i\leq n$ and observe that $\phi(f)\in W$ implies
$\phi(f)\uhr\KK_{V_i}\in W_i$; therefore, see also (\ref{equ:Cp-Ascoli-Claim-1}), $\phi_{V_i}(h_i)\in W_i$  for $h_i=(f-\psi(f|_C))|_{V_i}$. It follows from the above that $h_i \in [0,F_i,\e/2]\subset C_p(V_i)$. Since $\psi(f|_C))|_{V_i}\in [0,F_i,\e/2]$ and $h_i \in [0,F_i,\e/2]$, we have that
\[
f|_{V_i}=h_i+ \psi(f|_C))|_{V_i}\in [0,F_i,\e]
\]
which completes our proof in Case 1.

\smallskip
{\em Case 2. Assume that $x\not\in U$.}  This case is similar but simpler than the previous one.  Given any $y\in U$, set  $V_0(y)=O(y)\cap U $. Then $\V_0=\{V_0(y):y\in U\}$ is an open cover of a paracompact  scattered space $U$. So there exists a clopen cover $\V\prec \V_0$ of $U$ whose elements are mutually disjoint, see  \cite{Tel68}.  It follows from the above that each $V\in\V$ has property $(\dagger)$, and hence there exists a family $\KK_{V}$ of scattered compact subsets of $V$ such that $C_p(V)$ can be topologically embedded into $\prod_{K\in\KK_{V}}C_p(K)$ via a linear continuous map $$\phi_{V}:C_p(V)\to \prod_{K\in\KK_{V}}C_p(K)$$ such that $$\phi_{V}[C_p(V)]\supset\sigma_{K\in\KK_{V}}C_p(K).$$ Set  $\KK :=\bigcup \{\KK_{V}: V\in\V\}$ and
\[
\phi=(\phi_V)_{V\in\V} : C_p(U)=\prod_{V\in\V} C_p(V) \to \prod_{V\in\V} \prod_{K\in \KK_V}  C_p(K) =\prod \{C_p(K):K\in \KK\}.
\]
A direct verification shows that $\phi$ is a linear  embedding and $\phi[C_p(U)]$ contains $\sigma_{K\in \KK}C_p(K)$.
\end{proof}
Now we complete the proof of the proposition.  By Claim \ref{claim:Cp-scattered}, for every $x\in X$ choose a clopen neighbourhood $O(x)$ of $x$ with the property $(\dagger)$. Then $\V_0=\{O(x):x\in X\}$ is an open cover of a paracompact  scattered space $X$. By the same argument as in the proof of Case 2 of
Claim~\ref{claim:Cp-scattered} we get that  there exists a family $\KK$ of scattered compact spaces such that $C_p(X)$ is linearly homeomorphic to a linear subspace of $\prod_{K\in\KK}C_p(K)$ containing $\sigma_{K\in\KK}C_p(K)$. The latter $\sigma$-product is dense in $C_p(X)$ as it is dense in  $\prod_{K\in\KK}C_p(K)$. For any countable $\KK'\subset\KK$ the topological sum $\oplus\KK'$ is a Lindel\"of scattered space, and hence $$\prod_{K\in\KK'}C_p(K)=C_p(\oplus\KK')$$ is Fr\'echet--Urysohn by \cite[Theorem~II.7.16]{Arhangel}.  So $\sigma_{K\in\KK}C_p(K)$ is Fr\'echet--Urysohn by
Proposition~\ref{p:Cech-complete-kappa-scat},  and hence $C_p(X)$ can be covered by its dense Fr\'echet--Urysohn subspaces (namely shifts of $\sigma_{K\in\KK}C_p(K)$). Thus $C_p(X)$ is Ascoli by Lemma \ref{p:Ascoli-dense-subgroup}.
\end{proof}

Clearly if  $X$ has finitely many non-isolated points, then $X$ has property $(\star)$. Therefore we have the following
\begin{corollary} \label{c:Cp-Ascoli-finite}
If  $X$ has finitely many non-isolated  points then $C_p(X)$ is Ascoli.
\end{corollary}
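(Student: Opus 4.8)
The plan is to verify that a Tychonoff space $X$ with finitely many non-isolated points satisfies all three hypotheses of Proposition~\ref{p:Cp-Ascoli-czech-complete-paracomp}, namely that $X$ is scattered, paracompact, and enjoys property $(\star)$, and then simply to invoke that proposition to conclude that $C_p(X)$ is Ascoli. Let $N$ denote the finite set of non-isolated points of $X$.

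First I would record that $X$ is scattered of Cantor--Bendixson rank at most $2$. Indeed, by definition $X^{(1)}=X\setminus\mathit{Iso}(X)$ is precisely $N$; since a finite $T_1$-space is discrete, every point of $X^{(1)}$ is isolated in it, so $\mathit{Iso}(X^{(1)})=X^{(1)}$ and therefore $X^{(2)}=X^{(1)}\setminus\mathit{Iso}(X^{(1)})=\emptyset$. Next I would check paracompactness directly: given any open cover, choose one member $U_i$ around each of the finitely many points $x_i\in N$, and cover every isolated point not lying in $\bigcup_i U_i$ by its own open singleton. The resulting family refines the cover and is locally finite, because each $U_i$ meets none of the added singletons (these sit at isolated points outside $\bigcup_i U_i$), while every isolated point has a singleton neighbourhood meeting at most the finitely many $U_i$. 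Being scattered and paracompact, $X$ is zero-dimensional by Telgarsky's theorem \cite{Tel68}, so every point has a base of clopen neighbourhoods, which is exactly what property $(\star)$ requires in order to make sense.

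The central step is the verification of $(\star)$, and the key observation is that finiteness of $N$ lets me shrink the relevant compact set $C$ all the way down to a single point, so that the extension operator becomes trivial. For an isolated $x$ I would take $O(x)=\{x\}$, and for the only clopen $U=\{x\}$ set $C=\{x\}$, $U\setminus C=\emptyset$, and $\psi$ the identity. For a non-isolated $x$ I would use zero-dimensionality to pick a clopen neighbourhood $O(x)\subseteq X\setminus(N\setminus\{x\})$, so that $x$ is the only non-isolated point of $O(x)$ and hence of every clopen $U$ with $x\in U\subseteq O(x)$. For such a $U$ I would set $C=\{x\}$: it is compact with $x\in C\subset U$; the difference $U\setminus C$ consists entirely of isolated points, hence is discrete and a fortiori paracompact; and the required operator $\psi\colon C_p(C)\to C_p(U)$ is the map sending $f(x)\in C_p(\{x\})\cong\mathbb{R}$ to the corresponding constant function on $U$, which is visibly linear, continuous, and satisfies $\psi(f)|_C=f$.

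With $(\star)$ in hand, $X$ meets every hypothesis of Proposition~\ref{p:Cp-Ascoli-czech-complete-paracomp}, and that proposition yields at once that $C_p(X)$ is Ascoli. I do not expect any serious obstacle: the only points needing a genuine (if short) argument are the ad hoc proof of paracompactness and the appeal to Telgarsky's theorem for zero-dimensionality. Everything in the verification of $(\star)$ collapses to the trivial case precisely because the offending compact set can be taken to be the single non-isolated point of $U$, which makes both the paracompactness of $U\setminus C$ and the existence of the linear extension $\psi$ immediate.
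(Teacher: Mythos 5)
Your proof is correct and takes essentially the same route as the paper, whose entire argument is the remark that a space with finitely many non-isolated points ``clearly'' has property $(\star)$ (together with the implicit observations that it is scattered and paracompact), followed by an appeal to Proposition~\ref{p:Cp-Ascoli-czech-complete-paracomp}. Your write-up simply fills in the routine verifications the paper suppresses: Cantor--Bendixson rank $\le 2$, the direct paracompactness argument, Telgarsky's theorem to get clopen neighbourhoods avoiding the other non-isolated points, and the choice $C=\{x\}$ with the constant-extension operator $\psi$.
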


A regular topological space $X$ is {\em stratifiable} if there is a function $G$ which assigns to every $n\in\w$ and each closed set $F\subset X$ an open neighborhood $G(n,F)\subset X$ of $F$ such that $F=\bigcap_{n\in\w} \overline{G(n,F)}$ and $G(n,F)\subset G(n,F')$ for any $n\in\w$ and closed sets $F\subset F'\subset X$. Borges proved in \cite{Bor} that each stratifiable space $X$ satisfies Dugundgji's extension theorem: For every closed subset $A$ of $X$ there is a continuous linear operator $\psi:\CC(A)\to \CC(X)$ such that $\psi(g)|_A =g$ for every $g\in \CC(A)$.  Any metrizable space is stratifiable, and each stratifiable space is paracompact, see \cite[Theorem~5.7]{gruenhage}. Any subspace of a stratifiable space is stratifiable and hence is paracompact.

\begin{proof}[Proof of Theorem \ref{t:Cech-complete-Ascoli-scat}]
(i) follows from  Theorems \ref{t:Sakai-Cp-k-FU} and \ref{t:Cp-Ascoli-k-FU} and Proposition \ref{p:Cech-complete-kappa-scat}.

(ii) By Proposition \ref{p:Cp-Ascoli-czech-complete-paracomp} it is enough to show that every scattered  stratifiable space satisfies property $(\star)$. For every $x\in X$, let $O(x)$ be an arbitrary clopen neighborhood of $x$ and let $C=\{ x\}$. Now for every clopen $U$ with $x\in U\subseteq O(x)$, the difference $U\setminus C$ is paracompact  and there is a continuous linear operator $\psi: \CC(C)\to \CC(U)$. At the end of page 9 in \cite{Bor}  Borges proved that the operator $\psi$ is also continuous as a map from $C_p(C)$ to $C_p(U)$. Thus $X$ satisfies property $(\star)$.
\end{proof}

In light of Theorem~\ref{t:Cech-complete-Ascoli-scat} it is natural to ask the
following
\begin{question}
Does every scattered \v{C}zech-complete space have property $(\star)$?
\end{question}


The following corollary complements Theorem II.7.16 of \cite{Arhangel} and immediately implies Corollary \ref{c:compact-Ascoli-scat}.
\begin{corollary} \label{c:Cp-Ascoli-FU}
For a \v{C}ech-complete Lindel\"{o}f space $X$, the following assertions are equivalent:
\begin{itemize}
 \item[(i)] $C_p(X)$ is Ascoli;
  \item[(ii)] $C_p(X)$ is Fr\'echet--Urysohn;
  \item[(iii)] $X$ is scattered;
\item[(iv)] $X$ is scattered and $\sigma$-compact.
\end{itemize}
\end{corollary}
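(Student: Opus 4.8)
The plan is to establish the cycle of implications $(\mathrm{i})\Rightarrow(\mathrm{iii})\Rightarrow(\mathrm{iv})\Rightarrow(\mathrm{ii})\Rightarrow(\mathrm{i})$. Three of the four links are essentially free from what precedes. First, $(\mathrm{i})\Rightarrow(\mathrm{iii})$ is exactly Theorem \ref{t:Cech-complete-Ascoli-scat}(i), since $X$ is \v{C}ech-complete. Next, $(\mathrm{ii})\Rightarrow(\mathrm{i})$ is the implication ``Fr\'echet--Urysohn $\Rightarrow$ Ascoli'' from the diagram in the Introduction, applied to the space $C_p(X)$. Finally, $(\mathrm{iv})\Rightarrow(\mathrm{ii})$ follows from \cite[Theorem~II.7.16]{Arhangel}, as a $\sigma$-compact scattered space has $C_p(X)$ Fr\'echet--Urysohn. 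Thus the whole content is concentrated in $(\mathrm{iii})\Rightarrow(\mathrm{iv})$: I must show that a \v{C}ech-complete Lindel\"of scattered space is $\sigma$-compact.

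For $(\mathrm{iii})\Rightarrow(\mathrm{iv})$ I would argue as follows. Being Lindel\"of and regular, $X$ is paracompact, so being also \v{C}ech-complete it admits a \emph{perfect} map $f:X\to M$ onto a completely metrizable space $M$ (see \cite[Theorem~5.5.8]{Eng}). As a continuous image of the Lindel\"of space $X$, the space $M$ is Lindel\"of, and a Lindel\"of metrizable space is second countable. The crux is the following preservation fact: \emph{the image of a scattered space under a perfect map is scattered.} Granting it, $M$ is a second countable scattered space; but such a space is countable, since each Cantor--Bendixson layer $M^{(\alpha)}\setminus M^{(\alpha+1)}$ is a relatively discrete, hence countable, subspace, while the strictly increasing chain of open sets $\{M\setminus M^{(\alpha)}\}$ must have countable length in a second countable space. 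Writing $M=\{m_n:n\in\w\}$, we then have $X=\bigcup_{n\in\w}f^{-1}(m_n)$, a countable union of compact fibres, so $X$ is $\sigma$-compact, as required.

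It remains to justify the preservation fact, which I expect to be the main obstacle. Let $f:X\to Y$ be a perfect surjection with $X$ scattered, and suppose $Y$ is not scattered. The largest dense-in-itself subspace $Z\subseteq Y$ (the perfect kernel) is then nonempty, and it is closed because the closure of a crowded set is again crowded. Restricting $f$ to $f^{-1}(Z)$ gives a perfect surjection onto $Z$ whose domain is still scattered, so we may assume $Y$ itself is crowded and nonempty. Consider the derivatives $X^{(\alpha)}$. Since the fibres are compact, for any decreasing family of closed sets one has $f\big(\bigcap_\beta A_\beta\big)=\bigcap_\beta f(A_\beta)$; consequently $\alpha\mapsto f(X^{(\alpha)})$ does not drop at limits, and as $f(X^{(0)})=Y$ while $X^{(\gamma)}=\emptyset$ eventually, the least $\alpha$ with $f(X^{(\alpha)})\neq Y$ is a successor $\gamma+1$. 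Pick $y_0\in Y\setminus f(X^{(\gamma+1)})$. Then $f^{-1}(y_0)\cap X^{(\gamma)}$ is a compact subset of $\mathit{Iso}(X^{(\gamma)})$, hence finite and relatively open in $X^{(\gamma)}$; since $f\upharpoonright X^{(\gamma)}$ is a closed surjection onto $Y$, removing this finite fibre and applying closedness produces a neighbourhood of $y_0$ meeting no other fibre inside $X^{(\gamma)}$, so $y_0$ is isolated in $Y$, contradicting that $Y$ is crowded.

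I expect the delicate points to lie entirely in this last step: checking that the perfect kernel is closed, that images commute with decreasing intersections of closed sets under perfect maps (so the rank induction stops at a successor), and the final extraction of an isolated point of $Y$ from a finite relatively open fibre in $X^{(\gamma)}$. The remainder of the corollary is bookkeeping: once the cycle closes, all four conditions are equivalent, and specializing to compact $X$ recovers Corollary \ref{c:compact-Ascoli-scat}.
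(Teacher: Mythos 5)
Your proposal is correct and takes essentially the same route as the paper: the paper likewise gets (i)$\Rightarrow$(iii) from Theorem \ref{t:Cech-complete-Ascoli-scat}(i), gets Fr\'echet--Urysohn for scattered Lindel\"of $X$ from \cite[Theorem~II.7.16]{Arhangel}, notes (ii)$\Rightarrow$(i) is trivial, and proves (iii)$\Rightarrow$(iv) exactly as you do, via a perfect map onto a Polish space (Frol\'{\i}k's theorem), preservation of scatteredness under perfect maps, and countability of scattered Polish spaces. The only difference is that the paper cites the last two facts (the perfect-map preservation fact and \cite[8.5.5]{sema}) without proof, whereas you supply correct self-contained arguments for them.
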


\begin{proof}
(i)$\Rightarrow$(iii) follows from (i) of Theorem \ref{t:Cech-complete-Ascoli-scat}, (iii)$\Rightarrow$(ii) follows from \cite[Theorem~II.7.16]{Arhangel}, and (ii)$\Rightarrow$(i) is trivial.
(iii) $\Rightarrow$ (iv): If $X$ is a  \v{C}ech-complete Lindel\"{o}f space, then by Frolik's theorem, see \cite{comfort}, there exists a Polish space $Y$  and a perfect map from $X$ onto $Y$. As scattered property is inherited by perfect maps, the space $Y$ is scattered, hence countable by \cite[8.5.5]{sema}. Consequently $X$ is $\sigma$-compact.
\end{proof}

The famous Pytkeev--Gerlitz--Nagy theorem, see  \cite[Theorem~II.3.7]{Arhangel}, states that
  $C_p(X)$ is  a $k$-space if and only if $C_p(X)$ is Fr\'{e}chet--Urysohn if and only if
 $X$ has the covering property $(\gamma)$ introduced in \cite{GerNagy}.
 Below we give an example of a separable metrizable space $X$ for
which $C_p(X)$ is Ascoli but is not a $k$-space.  So the property to be an Ascoli space
is strictly weaker than the property to be a $k$-space for $C_p(X)$
even in the class of separable metric spaces.

 Recall that a separable metric space $X$ is said to be a {\em $\lambda$-space}
if every countable subset of $X$ is a $G_\delta$-set of $X$.
Every $\lambda$-space has property $(\kappa)$ by \cite[Theorem 3.2]{Sak2}.
So $C_p(X)$ is Ascoli by Corollary \ref{c:Cp-k-FU-metr} for such space $X$.

\begin{example} \label{exa:Cp-Ascoli-non-k-space} {\em
Rothberger proved in \cite{Rothberger} that there is an unbounded subset $X$ of
$\w^\w$ which is a $\lambda$-space, see also
 \cite[p.~215]{Miller}.
 So $X$ is a separable metrizable space with property $(\kappa)$ by Theorem 3.2 of
 \cite{Sak2}. Therefore $C_p(X)$ is an Ascoli space by Theorem \ref{t:Sakai-Cp-k-FU}
and Corollary \ref{c:Cp-k-FU-metr}. However, it follows from the results of
Gerlits and Nagy \cite{GerNagy} that no unbounded
subset of $\omega^\omega$ has property $(\gamma)$, and hence $C_p(X)$ is not
 Fr\'{e}chet--Urysohn. 
So $C_p(X)$ is not a $k$-space by the Pytkeev--Gerlitz--Nagy theorem. }
\end{example}

\begin{question} \label{q:Cp-cosmic-k_R}
Let $X$ be an uncountable cosmic space such that $C_p(X)$ is Ascoli (for example, $X$ is a  $\lambda$-space). Is then $C_p(X)$ a $k_\IR$-space?
\end{question}
The negative answer to this question would give an example of an Ascoli space $C_p(X)$ for separable  metrizable $X$ which is not a $k_{\IR}$-space. Let us note that the example provided in \cite{GGKZ-2} is not metrizable.

The assumption to be \v{C}ech-complete is essential for the results of this section as the metrizable space $C_p(\mathbb{Q})$ shows. We end this section with the following question.
\begin{question} \label{q:Cp-metriz}
For which metrizable spaces $X$ the space $C_p(X)$ is Ascoli?
\end{question}


\section{The Ascoli property for $\CC(X)$} \label{sec:Ck}


Let $X$ be a Tychonoff space and $\KK(X)$ be the set of all compact subsets of $X$. For $h\in C(X)$  the sets of the form
\[
[h,K,\e]:= \{ f\in C(X): |f(x)-h(x)|<\e \mbox{ for all } x\in K\}, \mbox{ where }
K\in \mathcal K(X)  \mbox{ and } \e>0,
\]
form a base at $h$ for the \emph{compact-open} topology  $\tau_k$  on $C(X)$. The space $C(X)$ equipped with $\tau_k$ is usually denoted by $\CC(X)$.

Theorem 2.5 of \cite{GKP} states in particular that, for a first-countable paracompact $\sigma$-space $X$, the space $\CC(X)$ is an Ascoli space if and only if  $\CC(X)$ is a  $k_\IR$-space if and only if $X$ is a locally compact metrizable space. In this section we prove an
 analogous result  using the following proposition.
\begin{proposition}[\cite{GKP}] \label{p:Ascoli-sufficient}
Assume   $X$ admits a  family $\U =\{ U_i : i\in I\}$ of open subsets of $X$, a subset $A=\{ a_i : i\in I\} \subset X$ and a point $z\in X$ such that
\begin{enumerate}
\item[{\rm (i)}] $a_i\in U_i$ for every $i\in I$;
\item[{\rm (ii)}] $\big|\{ i\in I: C\cap U_i\not=\emptyset \}\big| <\infty$  for each compact subset $C$ of $X$;
\item[{\rm (iii)}] $z$ is a cluster point of $A$.
\end{enumerate}
Then $X$ is not an Ascoli space.
\end{proposition}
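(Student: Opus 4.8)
The plan is to refute the Ascoli property directly: from $\U$, $A$ and $z$ I will build a single compact set $\KK\subseteq\CC(X)$ for which the evaluation map $X\times\KK\ni(x,f)\mapsto f(x)\in\IR$ fails to be continuous, the failure occurring at the point $(z,0)$, where $0$ denotes the constant zero function.

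First I would produce the functions. For each $i\in I$ the point $a_i$ lies in the open set $U_i$, so $X\setminus U_i$ is a closed set missing $a_i$; since $X$ is Tychonoff, complete regularity yields a continuous $f_i\colon X\to[0,1]$ with $f_i(a_i)=1$ and $f_i\equiv 0$ on $X\setminus U_i$. In particular each $f_i$ vanishes off $U_i$. Put $\KK:=\{f_i:i\in I\}\cup\{0\}$. The crucial observation, and the place where hypothesis (ii) is used, is that every neighbourhood of $0$ in $\CC(X)$ contains all but finitely many of the $f_i$. Indeed, a basic neighbourhood has the form $[0,C,\e]$ with $C$ compact and $\e>0$; whenever $U_i\cap C=\emptyset$ we have $f_i|_C\equiv 0$ and hence $f_i\in[0,C,\e]$, and by (ii) only finitely many indices $i$ satisfy $U_i\cap C\neq\emptyset$.

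From this I would deduce that $\KK$ is compact. Given any open cover $\W$ of $\KK$, choose $W_0\in\W$ with $0\in W_0$; then $W_0\supseteq[0,C,\e]$ for some compact $C$ and $\e>0$, so by the previous paragraph $W_0$ contains $f_i$ for every $i$ outside a finite set $F$. Covering each of the finitely many functions $f_i$, $i\in F$, by a single member of $\W$ produces a finite subcover, so $\KK$ is compact. The same observation also describes the trace on $\KK$ of the neighbourhood filter of $0$: any neighbourhood $\Nn$ of $0$ in $\KK$ contains $f_i$ for all $i$ outside some finite set.

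It remains to show that even continuity fails at $(z,0)$, where evaluation takes the value $0(z)=0$. Suppose it held; then for $\e=1/2$ there would be a neighbourhood $O$ of $z$ in $X$ and a neighbourhood $\Nn$ of $0$ in $\KK$ with $|f(x)|<1/2$ for every $(x,f)\in O\times\Nn$. By the previous paragraph $\Nn$ contains $f_i$ for all $i$ outside a finite set $F$, while by (iii) the point $z$ is a cluster point of $A$, so (as $X$ is $T_1$) the neighbourhood $O$ contains $a_i$ for infinitely many indices $i$. Picking such an $i\notin F$ gives $a_i\in O$ and $f_i\in\Nn$, yet $|f_i(a_i)|=1>1/2$, a contradiction. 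Hence $\KK$ is a compact subset of $\CC(X)$ which is not evenly continuous, and $X$ is not an Ascoli space. The only genuinely delicate point in this scheme is the compactness of $\KK$, which rests entirely on condition (ii); once that is in hand, conditions (i) and (iii) combine with a routine two-sided neighbourhood argument to break the joint continuity of evaluation at $(z,0)$.
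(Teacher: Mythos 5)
Your proof is correct, and it is essentially the standard argument for this proposition: the paper itself gives no proof (it cites \cite{GKP}), and the proof there proceeds exactly as yours does, taking Urysohn functions $f_i$ with $f_i(a_i)=1$ vanishing off $U_i$, using (ii) to show $\{f_i:i\in I\}\cup\{0\}$ is compact in $\CC(X)$, and using (iii) together with $T_1$-ness to break even continuity of the evaluation map at $(z,0)$. Nothing needs to be corrected.
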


Recall that  $X$  is of {\em point-countable type} if for every  $x\in X$ there
exists a compact $K$ containing $x$ such that $K$ has  countable basis of
neighborhoods, i.e. there is a sequence of open sets $\{U_n\}_{n<\w}$ such
 that $K\subseteq U_n$ for all $n<\w$ and for every open $O$ containing $K$
 there is $n<\w$  such that $U_n\subseteq O$. The following statement is
reminiscent of \cite[Proposition~2.3]{GKP}, and substantially uses the idea of
 R.~Pol from \cite{Pol-1974}.  We say that a space $X$
is \emph{locally pseudocompact} if for every $x\in X$ there exists an open $U\ni x$
whose closure $\bar{U}$ is pseudocompact.

\begin{lemma} \label{l:Ascoli-loc_pseud_comp}
Let $X$ be a  space of point-countable type. If $C_k(X)$ or $\CC(X,\II)$ is an Ascoli space, then $X$ is locally pseudocompact.
\end{lemma}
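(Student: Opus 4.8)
The plan is to prove the contrapositive via Proposition~\ref{p:Ascoli-sufficient} applied \emph{inside the function space}: assuming $X$ is of point-countable type but not locally pseudocompact, I will produce a family $\{\mathcal{O}_i:i\in I\}$ of open subsets of $\CC(X)$ (resp.\ $\CC(X,\II)$), points $a_i\in\mathcal{O}_i$, and a cluster point $g$ of $\{a_i\}$ satisfying conditions (i)--(iii) of that proposition, whence the function space is not Ascoli. Since local pseudocompactness fails, I first fix $z\in X$ such that $\overline{U}$ is not pseudocompact for every open $U\ni z$. Point-countable type then supplies a compact $K\ni z$ with a countable outer base $\{U_n:n\in\w\}$, which by regularity I may take decreasing with $\overline{U_{n+1}}\subseteq U_n$, so that every open set containing $K$ contains all but finitely many $U_n$. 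For each $n$, non-pseudocompactness of $\overline{U_n}$ furnishes an infinite, pairwise disjoint, locally finite (in $X$) family $\{W^n_k:k\in\w\}$ of nonempty open sets with $\overline{W^n_k}\subseteq U_n$, avoiding $K$; from these I choose base points $p^n_k$ and, using that $X$ is Tychonoff, auxiliary continuous functions supported in $W^n_k$.

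Second, I would assemble the functions $a_i$ and their compact-open neighbourhoods $\mathcal{O}_i$ from these ingredients, indexing along the two directions $(n,k)$ so that the \emph{horizontal} direction (large $k$ at a fixed level) produces supports escaping every compact subset of $X$, while the \emph{vertical} direction (large $n$) pushes the active region toward $K\ni z$. The neighbourhood $\mathcal{O}_i$ is a finite intersection of pointwise-open conditions forcing any $f\in\mathcal{O}_i$ to carry a prescribed feature near the $i$-th region while remaining controlled at $z$; condition~(i) is then immediate from the choice of $a_i$. The cluster point $g$ required in (iii) is extracted from the vertical structure of the nested base, the shrinking of $U_n$ to $K$ providing the accumulation of $\{a_i\}$ in the function space.

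The hard part, and the main obstacle, is the simultaneous validity of (ii) and (iii): the family $\{\mathcal{O}_i\}$ must be spread out enough that every compact $\mathcal{C}\subseteq\CC(X)$ meets only finitely many $\mathcal{O}_i$, yet $\{a_i\}$ must genuinely accumulate to $g$. The tool for (ii) is that a compact subset of $\CC(X)$ is uniformly bounded on each compact subset of $X$ (the seminorm $f\mapsto\sup_{x\in D}|f(x)|$ is continuous, hence bounded on $\mathcal{C}$) and, more robustly, is evenly continuous; so if some compact $\mathcal{C}$ met infinitely many $\mathcal{O}_i$, extracting a cluster function of the corresponding witnesses and testing even continuity at $z$ against the defining conditions of $\mathcal{O}_i$ yields a contradiction. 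For $\CC(X)$ one can lean on unboundedness of values, whereas for $\CC(X,\II)$ the values are trapped in $[0,1]$, so the escape from compactness must be encoded purely through failure of even continuity at $z$. The delicate balance—realizing non-even-continuity at $z$ while keeping $g$ a bona fide (necessarily non-sequential) cluster point, rather than forcing $\{a_i\}$ either to converge to $g$ (which would make $\{a_i\}\cup\{g\}$ a compact set meeting every $\mathcal{O}_i$) or to lose its cluster point altogether—is to be achieved by a careful diagonal choice of the indices across the levels $n$. It is precisely here that point-countable type is essential: by localizing the whole configuration to the compact set $K$ with a countable outer base, it makes the two competing requirements compatible.
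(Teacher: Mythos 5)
Your high-level plan is the same as the paper's: argue the contrapositive and feed Proposition~\ref{p:Ascoli-sufficient} a configuration inside $\CC(X)$ (resp.\ $\CC(X,\II)$) built from a compact $K\ni z$ with a countable decreasing outer base $\{U_n\}_{n\in\w}$, together with locally finite families of open sets witnessing non-pseudocompactness. Your first paragraph is essentially sound, modulo routine repairs (the families produced by non-pseudocompactness of $\overline{U_n}$ are a priori only relatively open and locally finite in $\overline{U_n}$; one must shrink them into $U_n$ and discard the finitely many members meeting a neighbourhood of $K$). The genuine gap is that everything after that --- the actual functions $a_i$, the actual neighbourhoods $\mathcal{O}_i$, and the simultaneous verification of (ii) and (iii) --- is only announced (``to be achieved by a careful diagonal choice''), and the specific hints you do give describe a construction that fails. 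If the $a_i$ are ``supported in $W^n_k$'' and the $\mathcal{O}_i$ only force a feature on those escaping sets ``while remaining controlled at $z$'', then (ii) is false: at a fixed level $n$ the sets $W^n_k$ escape every compact subset of $X$ as $k\to\infty$, so the functions $a_{n,k}$ converge in the compact-open topology to their common background constant $g$, and the compact set $\{a_{n,k}:k\in\w\}\cup\{g\}$ meets $\mathcal{O}_{n,k}$ for every $k$. Likewise, controlling members of $\mathcal{O}_i$ only at the single point $z$ cannot work: a single continuous function equal to $0$ at $z$ and to $1$ at infinitely many of your base points (these accumulate somewhere on $K$, possibly far from $z$) lies in infinitely many such neighbourhoods.

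The paper's proof shows exactly what extra structure resolves the tension you correctly identify, and it is not cosmetic. First, a preliminary claim: infinitely many annuli $P_k=\overline{U_{n_k}}\setminus U_{n_k+1}$ are non-pseudocompact, so the level-$k$ family $\{U_{i,k}\}_{i\in\w}$ can be placed in $P_k$, disjoint from all other levels and from a neighbourhood of $K$ (your per-level extraction inside $U_n$ skips this, at the price of possibly overlapping levels, which already complicates defining the functions below). Second, each function $f_{i,k}$ ($k<i$) carries \emph{two} marks and a \emph{nonzero} background: $f_{i,k}=1$ at $x_{i,k}\in P_k$, $f_{i,k}=0$ at the diagonal point $x_{i,i}\in P_i$, and $f_{i,k}=1/k$ elsewhere, in particular on all of $K$; the neighbourhood $V_{i,k}$ enforces all three conditions, the last one on the whole of $K$, not just at $z$. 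The background $1/k\to 0$ is what makes $0$ a cluster point of $A$ (for (iii): choose $k$ with $1/k<\e$, then $i$ with $U_{i,k}$ missing the given compact set), while the zero at $x_{i,i}$, which approaches $K$ as $i\to\infty$, is what kills the compact set exhibited above: by the classical Ascoli theorem a compact $C\subseteq\CC(X)$ is equicontinuous along the compact set consisting of $K$ together with the marked points, and this is incompatible with members of $V_{i,k}$ being $\approx 1/k$ on $K$ yet $\approx 0$ at $x_{i,i}$ (fixed $k$, $i$ large), or $\approx 1$ at $x_{i,k}$ (along sequences with $k\to\infty$). Your sketch contains neither the diagonal marking nor the control on all of $K$, and these are precisely the devices that make (ii) and (iii) compatible; without supplying them (or an equivalent mechanism), the proposal is not a proof.
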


\begin{proof}
Assume that $X$ is not locally pseudocompact, so there exists $x_0\in X$ such that no neighborhood of $x_0$ is pseudocompact. Because $X$ is of point-countable type there is a compact set $K\subset X$ such that $x_0\in K$ and there is a base of neighborhoods $\{U_n\}_{n\in\w}$ of $K$ such that $\overline{U_{n+1}}\subsetneq U_n$  (here we use the fact that $K$ is compact and $X$ is Tychonoff).

We show that there is a strictly increasing sequence $\{n_k\}_{k\in \w}$ such that $n_{k+1}>n_k+1$ and for every $k\in\w$, the difference
$\overline{U_{n_k}}\setminus U_{n_k+1}$ is not pseudocompact. Indeed, otherwise there exists $n_0$ such that $\overline{U_{n}}\setminus U_{n+1}$ is pseudocompact for all $n\geq n_0$. We claim that  $\overline{U_{n_0}}$ is a pseudocompact neighborhood of $x_0$  which leads to a contradiction. Given any continuous $f:\overline{U_{n_0}}\to \IR$, there exists $m\in \IR$ such that $f^{-1}[(-m,m)]$ is an open set containing $K$, and therefore it contains some $U_{n_1}$, which together with the pseudocompactness of $\overline{U_{n_0}}\setminus U_{n_1}$ implies that $f$ is bounded.

Set $P_k:=\overline{U_{n_k}}\setminus U_{n_k+1}$. Since every $P_k$ is not pseudocompact, by \cite[Theorem~3.10.22]{Eng}  there exists a locally finite collection $\{U_{i,k}\}_{i<\w}$ of nonempty open subsets  of $P_k$. We may assume in addition that  every $U_{i,k}\subseteq \mathrm{Int}(P_k)$. Pick any $x_{i,k}\in U_{i,k}$, and for $1\leq k<i$ find continuous functions $f_{i,k} :X\to [0,1]$ such that
\[
f_{i,k}(x_{i,k})=1, \ f_{i,k}(x_{i,i})=0,  \mbox{ and } \ f_{i,k}(x) = \frac{1}{k} \mbox{ for } \ x\not\in U_{i,k}\cup U_{i,i}.
\]
Set $A:=\{f_{i,k}:1\le k<i<\w\}$ and $\V :=\{V_{i,k}\}_{1\leq k<i<\w}$, where $V_{i,k}\subset \CC(X)$ or $V_{i,k}\subset \CC(X,\II)$ and $h\in V_{i,k}$ if
\[
|h(x_{i,k})-1|<\frac{1}{4^{i+k}}, \ |h(x_{i,i})|<\frac{1}{4^{i+k}}, \mbox{ and } \left| h(x)-\frac{1}{k}\right|<\frac{1}{4^{i+k}} \mbox{ for all }x\in K.
\]
We shall complete the proof by  showing  that $A$, $\V$ and $0$ satisfy the assumption of Proposition~\ref{p:Ascoli-sufficient}. The first one is by definition. For (iii),  assume that $Z\subset X$ is compact  and fix $\e>0$. Find $k<\w$ such that $\frac{1}{k}<\e$ and $i>k$ such that $Z\cap U_{i,k} = \emptyset$ (this is possible because $Z$ is compact and $\{U_{i,k}\}_{i<\w}$ is a locally finite collection).  It follows  that $$f_{i,k}(z)\leq \frac{1}{k}<\e$$ for every $z\in Z$. Thus $0\in \overline{A}$.

Let us check (ii): any compact subset $C$ of $\CC(X)$ or of $\CC(X,\II)$ meets only finitely many elements of $\V$. By the Ascoli theorem \cite[Theorem~3.4.20]{Eng}, for every compact $Z\subset X$,  $x\in Z$ and $\e>0$ there is a  neighborhood $O_x$ of $x$ such that $|f(x)-f(y)|<\e$ for all $y\in O_x\cap Z$ and $f\in C$. Define
\[
Z_0:=\{x_{i,k}:1\leq i\leq k<\w\}\cup K,
\]
and note that $Z_0$ is a compact subset of $X$.

We claim that for every $k<\w$ there is $i_0>k$ such that $C\cap V_{i,k}=\emptyset$ for every $i>i_0$.
Indeed, assume the converse. Using  the Ascoli theorem for $C$, $Z_0$ and $\e=\frac{1}{3k}$, for every $x\in Z_0$  we find a neighborhood $O_x$ of $x$ such that $|h(y)-h(x)|<\e$ for every $y\in O_x$ and $h\in C$. Then the collection $\{O_x\}_{x\in Z_0}$ covers $K\subset Z_0$, so there exists $i_0$ such that $U_{i_0}\subset \bigcup_{x\in Z_0} O_x$. Take any $i>i_0$, $h\in C\cap V_{i,k}$ and  $x\in K$ such that $x_{i,i}\in O_x$ (recall that $x_{i,i}\in U_{i,i} \subset P_{i} \subset U_{n_{i_0}}$, and clearly $n_{i_0}\geq i_0$). By construction, $$K\cap (U_{i,k}\cup U_{i,i}) =\emptyset,$$ so $f_{i,k}(x)=1/k$ and $f_{i,k}(x_{i,i})=0$. Since $h\in C\cap V_{i,k}$ we obtain
\begin{eqnarray*}
\begin{aligned}
\frac{1}{3k} > |h(x_{i,i})-h(x)| & \geq |f_{i,k}(x_{i,i})-f_{i,k}(x)|-|f_{i,k}(x_{i,i})-h(x_{i,i})|-|h(x)-f_{i,k}(x)| \\
& >  \frac{1}{k}-\frac{1}{4^{i+k}}-\frac{1}{4^{i+k}}>\frac{1}{3k},
\end{aligned}
\end{eqnarray*}
 a contradiction. This contradiction proves the claim.

To finish the proof it is enough to show that there is no sequence $\{(i_n,k_n)\}_{n<\w}$ such that
\[
...<k_n<i_n<k_{n+1}<i_{n+1}<...
\]
and $V_{i_n,k_n}\cap C\not=\emptyset$. If not, consider the compact subset $Z_1:=\{x_{i_n,k_n}:n<\w\}\cup K$ of $X$. Using  the Ascoli theorem for $C$, $Z_1$ and $1/3$, for every $x\in Z_1$  we find a neighborhood $O_x$ of $x$ such that $|h(y)-h(x)|<1/3$ for every $y\in O_x$ and $h\in C$. Again, the collection $\{O_x\}_{x\in K}$ covers $K\subset Z_1$, so there exists $k>10$ such that $U_{k}\subset \bigcup_{x\in K} O_x$. Pick $n$ such that $k<k_n$ and note that
there is $x\in K$ such that $x_{i_n,k_n}\in O_x$.  Then, as above, for any $h\in V_{i_n,k_n}\cap C$ we have
\begin{eqnarray*}
\begin{aligned}
\frac{1}{3} & > |h(x_{i_n,k_n})-h(x)| \\
& \geq |f_{i_n,k_n}(x_{i_n,k_n})-f_{i_n,k_n}(x)|-|f_{i_n,k_n}(x_{i_n,k_n})-h(x_{i_n,k_n})|- |h(x)-f_{i_n,k_n}(x)| \\
& > (1-1/k_n)-4^{-(i_n+k_n)}-4^{-(i_n+k_n)}>\frac{1}{3},
\end{aligned}
\end{eqnarray*}
which is the desired contradiction.
\end{proof}

We need the following result.
\begin{lemma} \label{l:paracom-pseudo-compact}
Every paracompact locally pseudocompact space $X$ is locally compact.
\end{lemma}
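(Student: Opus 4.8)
The plan is to exploit the fact that, for a paracompact space, local pseudocompactness can be promoted to local compactness by a covering-and-closure argument. First I would recall the basic fact that a pseudocompact paracompact space is compact: a paracompact space is normal, and a normal pseudocompact space is countably compact (indeed a normal space is pseudocompact iff it is countably compact, since on a normal space an infinite closed discrete set gives an unbounded continuous function via Tietze); and a countably compact paracompact space is compact (a paracompact space is metacompact, and a countably compact metacompact space is compact). I would state this as the key auxiliary fact, citing \cite{Eng} for the relevant exercises, rather than reproving it.

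Given this fact, the argument is short. Fix $x\in X$. By local pseudocompactness there is an open $U\ni x$ with $\overline{U}$ pseudocompact. The closed subspace $\overline{U}$ of the paracompact space $X$ is itself paracompact, and a closed subspace of a pseudocompact space need not be pseudocompact in general, but here $\overline{U}$ is exactly the pseudocompact set in the definition, so $\overline{U}$ is a paracompact pseudocompact space. By the auxiliary fact, $\overline{U}$ is compact. Hence $x$ has the open neighborhood $U$ with compact closure, which is precisely local compactness at $x$. Since $x$ was arbitrary, $X$ is locally compact.

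The main subtlety, and the only point requiring care, is the reduction \emph{paracompact $+$ pseudocompact $\Rightarrow$ compact}. I expect this to be the one place where a reader wants justification, so I would spell out the chain: paracompact $\Rightarrow$ normal (so pseudocompact $\Leftrightarrow$ countably compact), and paracompact $\Rightarrow$ metacompact (every open cover has a point-finite open refinement), together with the classical result that countably compact $+$ metacompact $\Rightarrow$ compact. All of these are standard and available in \cite{Eng}, so the write-up is a matter of assembling references rather than genuine new work; there is no significant obstacle beyond correctly invoking that $\overline{U}$, being closed in $X$, inherits paracompactness.
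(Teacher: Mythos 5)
Your proposal is correct and is essentially the paper's own argument: the paper also fixes $x$, takes $U\ni x$ with $\overline{U}$ pseudocompact, notes that $\overline{U}$ is paracompact as a closed subspace, and concludes compactness of $\overline{U}$ by citing the standard facts (paracompact $\Rightarrow$ normal, normal $+$ pseudocompact $\Rightarrow$ countably compact, countably compact $+$ paracompact $\Rightarrow$ compact) from \cite{Eng}. Your only deviation is routing the last step through metacompactness, which is just one standard way of proving the same cited result.
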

\begin{proof}
Let $x\in X$ and take a  neighborhood $U$ of $x$ with pseudocompact closure $\overline{U}$. Then  $\overline{U}$ is compact being pseudocompact and  paracompact, see, e.g., \cite[3.10.21, 5.1.5, and 5.1.20]{Eng}.
\end{proof}

Now we are ready to prove the main result of this section.

\begin{proof}[Proof of Theorem \ref{t:Ck-Ascoli-point-count-type}]
(i)$\Rightarrow$(ii) follows from \cite[5.1.27]{Eng}.

(ii)$\Rightarrow$(iii),(v): If $X=\bigoplus_{i\in\kappa} X_i$, then
\[
\CC(X) =\prod_{i\in\kappa} \CC(X_i)\quad \mbox{ and } \quad \CC(X,\II) =\prod_{i\in\kappa} \CC(X_i,\II),
\]
where all the spaces $ \CC(X_i)$ and $\CC(X_i,\II)$ are complete metrizable. So $\CC(X)$ and $\CC(X,\II)$ are  $k_\IR$-spaces by \cite[Theorem 5.6]{Nob}.

(iii)$\Rightarrow$(iv) and (v)$\Rightarrow$(vi) follow from \cite{Noble}. The implications (iv)$\Rightarrow$(i) and (vi)$\Rightarrow$(i) follow from Lemmas \ref{l:Ascoli-loc_pseud_comp} and \ref{l:paracom-pseudo-compact}.
\end{proof}

 Theorem \ref{t:Ck-Ascoli-point-count-type} also holds for some
spaces without point-countable type.
\begin{example} \label{exa:Ascoli-non-point-type} {\em
Let $X=D\cup\{ \infty\}$ be the one point Lindel\"{o}fication of an uncountable discrete space $D$. Clearly, $X$ is scattered and Lindel\"{o}f. Since any compact subset of $X$ is finite and $D$ is uncountable, the space $X$ is not of point-countable type. Nevertheless, $\CC(X)=C_p(X)$ is Ascoli by Corollary \ref{c:Cp-Ascoli-finite}.}
\end{example}

The following statement  probably belongs to folklore.

\begin{lemma} \label{l:N_to_omega_1}
Let $X$ be a paracompact space which is not Lindel\"of. Then $\w^{\w_1}$ can be embedded into $\CC(X)$ as a closed subspace, where $\w$ is considered with the discrete topology.
\end{lemma}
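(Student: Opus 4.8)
The plan is to read off from the failure of the Lindel\"of property an uncountable, ``spread--out'' family of points, and then to use it as the coordinate system of the desired product.

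\emph{Step 1: an uncountable closed discrete set.} Since $X$ is paracompact but not Lindel\"of, I would fix an open cover $\mathcal V$ of $X$ with no countable subcover and a locally finite open refinement $\mathcal U=\{U_i:i\in I\}$ (discarding empty members). Because $\mathcal U$ refines $\mathcal V$, it too has no countable subcover. By transfinite recursion on $\alpha<\w_1$ I would then pick $x_\alpha\in X\setminus\bigcup_{\beta<\alpha}U_{i_\beta}$ (possible, as the countable family chosen so far does not cover $X$) together with some $U_{i_\alpha}\in\mathcal U$ with $x_\alpha\in U_{i_\alpha}$. The indices $i_\alpha$ are pairwise distinct, since $x_\alpha\notin U_{i_\beta}$ for $\beta<\alpha$, so by local finiteness every point has a neighbourhood meeting only finitely many of the $U_{i_\alpha}$, and hence meeting $D:=\{x_\alpha:\alpha<\w_1\}$ in a finite set. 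Together with the $T_1$ property this shows that $D$ is a closed discrete subspace of $X$ of size $\w_1$.

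\emph{Step 2: the embedding.} For each $\alpha$ put $G_\alpha:=U_{i_\alpha}\setminus(D\setminus\{x_\alpha\})$, which is open (as $D\setminus\{x_\alpha\}$ is closed), satisfies $x_\alpha\in G_\alpha\subseteq U_{i_\alpha}$ and $G_\alpha\cap D=\{x_\alpha\}$. By complete regularity choose continuous $g_\alpha\colon X\to[0,1]$ with $g_\alpha(x_\alpha)=1$ and $g_\alpha\equiv 0$ off $G_\alpha$; then $g_\alpha(x_\beta)=0$ for $\beta\neq\alpha$, and the sets $\{g_\alpha\neq 0\}\subseteq U_{i_\alpha}$ form a locally finite family. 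Define $e\colon\w^{\w_1}\to\CC(X)$ by $e(s)=\sum_{\alpha<\w_1}s_\alpha g_\alpha$; local finiteness makes each $e(s)$ a well-defined continuous function, and on any compact $K\subseteq X$ only the finitely many $\alpha$ with $U_{i_\alpha}\cap K\neq\emptyset$ contribute, so $e(s)|_K$ depends on finitely many coordinates of $s$, giving continuity of $e$. Since $e(s)(x_\alpha)=s_\alpha$, the map $e$ is injective, and its inverse is given coordinatewise by the continuous evaluations $f\mapsto f(x_\alpha)$ (point evaluations are $\tau_k$-continuous). Hence $e$ is a homeomorphism onto its image.

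\emph{Step 3: closedness of the image.} This is the delicate point, and I would handle it by exhibiting $e[\w^{\w_1}]$ as an intersection of closed sets rather than chasing nets. Each $E_\alpha:=\{f\in\CC(X):f(x_\alpha)\in\w\}$ is closed, being the preimage of the closed set $\w\subseteq\RR$ under a continuous evaluation. The operator $S\colon\CC(X)\to\CC(X)$, $S(f):=\sum_\alpha f(x_\alpha)g_\alpha$, is continuous by the same local-finiteness estimate that gave continuity of $e$ (on a compact $K$ only finitely many terms survive, and $|g_\alpha|\le 1$), so with $T:=\mathrm{id}-S$ the set $T^{-1}(\{0\})$ is closed. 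A direct check shows $f\in\bigcap_\alpha E_\alpha\cap T^{-1}(\{0\})$ exactly when all $f(x_\alpha)\in\w$ and $f=\sum_\alpha f(x_\alpha)g_\alpha$, i.e. exactly when $f=e\big((f(x_\alpha))_{\alpha<\w_1}\big)$; thus $e[\w^{\w_1}]=\bigcap_\alpha E_\alpha\cap T^{-1}(\{0\})$ is closed. Combining the three steps, $e$ is a closed embedding of $\w^{\w_1}$ into $\CC(X)$. The conceptual heart is the recursion of Step~1 producing the closed discrete set, while the main technical obstacle is the closedness in Step~3, which the algebraic description above resolves; the remaining verifications are routine local-finiteness computations.
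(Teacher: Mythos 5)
Your proof is correct, and it reaches the conclusion by a genuinely different route than the paper. The paper's argument is a short structural reduction: it quotes Lemma~2.2 of \cite{Bur84} to obtain an uncountable $A\subset X$ and open $U_a\ni a$ forming a \emph{discrete} family (each point has a neighbourhood meeting at most one $U_a$), sets $Z:=\{f\in\CC(X): f\uhr(X\setminus\bigcup_{a\in A}U_a)=0\}$ and $Z_a:=\{f\in\CC(X): f\uhr(X\setminus U_a)=0\}$, notes that $Z$ is closed and that discreteness makes $Z$ homeomorphic to $\prod_{a\in A}Z_a$, and finishes because each $Z_a$ is a nontrivial Hausdorff linear topological space, hence contains a closed copy of $\IR$ and so of $\w$. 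You do two things differently. First, you reprove the combinatorial input from scratch: your transfinite recursion through a locally finite refinement produces a closed discrete set $\{x_\alpha:\alpha<\w_1\}$ with $x_\alpha\in U_{i_\alpha}$ and $\{U_{i_\alpha}\}_{\alpha<\w_1}$ locally finite --- note this is strictly weaker than Burke's conclusion, since your $x_\alpha$ may lie in $U_{i_\beta}$ for $\beta>\alpha$, so the $U_{i_\alpha}$ need not be pairwise disjoint. Second, precisely because you lack disjointness, the paper's product decomposition is unavailable (functions supported in $\bigcup_\alpha U_{i_\alpha}$ no longer split canonically into pieces), and you compensate with the operator trick: the image of your embedding $e$ is the set of those $f$ with all $f(x_\alpha)\in\w$ that are fixed by the continuous sampling operator $S(f)=\sum_\alpha f(x_\alpha)g_\alpha$, an intersection of closed sets. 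That fixed-point description is the real novelty of your write-up, and it is sound, as are the routine verifications (continuity of $e$ and of $S$ via local finiteness, continuity of the inverse via point evaluations, and the identity $e[\w^{\w_1}]=\bigcap_\alpha E_\alpha\cap\ker(\mathrm{id}-S)$). In short: the paper's proof is shorter and cleaner because it outsources the combinatorics to Burke and the closedness to a product decomposition of a closed subspace; yours is self-contained, gets by with local finiteness instead of discreteness, and isolates a reusable device, namely a continuous idempotent on $\CC(X)$ whose fixed-point set realizes the closed copy of $\w^{\w_1}$.
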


\begin{proof}
Since $X$ is paracompact and non-Lindel\"of, Lemma~2.2 of \cite{Bur84} implies that there is an uncountable $A\subset X$ and open $U_a\ni a$ for every $a\in A$ such that each $x\in X$ has a neighbourhood which meets at most one of the $U_a$'s. Set
\[
Z:=\{f\in \CC(X):f\uhr(X\setminus\bigcup_{a\in A}U_a)=0\} \; \mbox{ and }\; Z_a:=\{f\in \CC(X):f\uhr(X\setminus U_a)=0\}.
\]
Then $Z$ is a closed subspace of $\CC(X)$ and $Z=\prod_{a\in A}Z_a$. It suffices to note that each $Z_a$ contains a closed copy of $\IR$ (and hence of $\w$) being a linear topological space.
\end{proof}

Recall that a {\em compact resolution} in a topological space $X$ is a family $\{ K_\alpha:
\alpha\in\NN^\NN\}$ of compact subsets of $X$ which covers $X$ and satisfies the condition: $K_\alpha\subseteq K_\beta$ whenever $\alpha\leq\beta$ for all $\alpha,\beta\in\NN^\NN$.
\begin{lemma} \label{l:comp_res_lind}
Let $X$ be a paracompact space with  compact resolution. Then $X$ is Lindel\"of.
\end{lemma}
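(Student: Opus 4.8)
The plan is to argue by contradiction: assuming $X$ is paracompact, carries a compact resolution $\{K_\alpha:\alpha\in\w^\w\}$, but fails to be Lindel\"of, I will manufacture an uncountable closed discrete subset of the Baire space $\w^\w$, which is impossible because $\w^\w$ is second countable. The single idea that makes this work is that the \emph{monotonicity} of a compact resolution turns a "scattering" hypothesis on compact sets into a "scattering" statement about lower cones in $\w^\w$.

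First I would exploit the failure of Lindel\"ofness exactly as in the proof of Lemma~\ref{l:N_to_omega_1}: since $X$ is paracompact and not Lindel\"of, Lemma~2.2 of \cite{Bur84} furnishes an uncountable set $A\subseteq X$ together with open sets $U_a\ni a$ ($a\in A$) such that every point of $X$ has a neighbourhood meeting at most one $U_a$. Thus $\{U_a:a\in A\}$ is discrete, so any compact $C\subseteq X$ is covered by finitely many such neighbourhoods and hence meets only finitely many $U_a$; as $a\in U_a$, this gives that $C\cap A$ is finite for every compact $C$. In particular each $K_\alpha\cap A$ is finite.

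Next, for every $a\in A$ I would pick $\alpha_a\in\w^\w$ with $a\in K_{\alpha_a}$ and set $S:=\{\alpha_a:a\in A\}\subseteq\w^\w$. If $S$ were countable, then $A=\bigcup_{s\in S}\{a\in A:\alpha_a=s\}$ would be a countable union of finite sets (since $\{a:\alpha_a=s\}\subseteq K_s\cap A$), hence countable, contradicting the uncountability of $A$; so $S$ is uncountable. By monotonicity, $a\in K_{\alpha_a}\subseteq K_\beta$ whenever $\alpha_a\le\beta$, so $\{a\in A:\alpha_a\le\beta\}\subseteq K_\beta\cap A$ is finite for every $\beta\in\w^\w$. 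Taking images under $a\mapsto\alpha_a$, the lower cone $\{s\in S:s\le\beta\}$ is finite for each $\beta\in\w^\w$.

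Finally I would read this off topologically in $\w^\w$ (with $\w$ discrete). Each lower cone $\{s:s\le\beta\}=\prod_{k}\{0,\dots,\beta(k)\}$ is a compact "box", and conversely every compact subset of $\w^\w$ lies in such a box, since its projection to each coordinate is a finite subset of $\w$. Hence $S$ meets every compact subset of $\w^\w$ in a finite set, which in the metrizable space $\w^\w$ forces $S$ to be closed and discrete (otherwise a convergent sequence drawn from $S$ would give a compact set meeting $S$ infinitely). But $\w^\w$ is second countable, hence hereditarily Lindel\"of, so it admits no uncountable discrete subspace --- the desired contradiction. The genuinely delicate step is the combinatorial core of the third paragraph, namely using monotonicity to convert "$K_\beta\cap A$ is finite" into "$S$ meets every compact box of $\w^\w$ finitely"; once that correspondence is in place, second countability of $\w^\w$ closes the argument and the remaining verifications are routine bookkeeping.
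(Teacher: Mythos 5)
Your proof is correct, but after the common first step it runs along a genuinely different track from the paper's. Both arguments start by invoking Lemma~2.2 of \cite{Bur84} for paracompact non-Lindel\"of spaces; the paper takes from it an uncountable \emph{closed} discrete subset $Y\subset X$, restricts the compact resolution to $Y$ (closedness keeps each $K_\alpha\cap Y$ compact, discreteness makes it finite), and then quotes \cite[Corollary~6.2]{kak} --- every metric space with a compact resolution is separable --- to conclude that $Y$ is separable, hence countable, a contradiction. You instead use the discrete-family form of Burke's lemma and replace the external citation by a self-contained combinatorial argument: from ``$K_\beta\cap A$ is finite for every $\beta$'' and monotonicity of the resolution you extract an uncountable $S\subseteq\w^\w$ all of whose lower cones $\{s\in S: s\le\beta\}$ are finite, note that every compact subset of $\w^\w$ sits inside a box $\prod_{k}\{0,\dots,\beta(k)\}$, and conclude that $S$ is an uncountable closed discrete subspace of the second-countable space $\w^\w$, which is impossible. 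In effect you re-prove exactly the special case of \cite[Corollary~6.2]{kak} that the paper needs (a monotone $\w^\w$-indexed family of finite sets can cover only countably many points). What this buys: your argument is self-contained and slightly more robust --- it never needs the trace sets $K_\alpha\cap A$ to be compact, so you do not need $A$ to be closed in $X$ --- while the paper's proof is shorter at the price of leaning on the cited monograph. All the individual steps you flag as routine (compact sets meet a discrete family finitely, compacta in $\w^\w$ are contained in boxes, a set meeting every compactum finitely is closed discrete in a metrizable space) check out.
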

\begin{proof}
Suppose for a contradiction that $X$ is not Lindel\"{o}f.  Then $X$ contains a closed discrete uncountable subset $Y$ by  \cite[Lemma~2.2]{Bur84}. Hence the  compact resolution restricted to $Y$ is also a compact resolution on $Y$. So $Y$ is a metric space with a compact resolution. Therefore  $Y$ is separable by \cite[Corollary 6.2]{kak}, and hence it is   countable being discrete, a contradiction.
\end{proof}

Recall that a space $X$ is \emph{hemicompact} if it has a countable family of
compact subspaces which is cofinal with respect to inclusion
in the family of all of its compact subspaces.
The following theorem extends Corollary 4 of \cite{McCoy-1983}.
\begin{theorem} \label{t:Ascoli-compact-res}
Let $X$ be a paracompact space of  point-countable type.  Then the following
conditions are equivalent:
\begin{enumerate}
\item[{\rm (i)}] $X$ is hemicompact;
\item[{\rm (ii)}]  $\CC(X)$ is a $k$-space;
\item[{\rm (iii)}] $\CC(X)$ is Ascoli and $X$ has a compact resolution.
\end{enumerate}
\end{theorem}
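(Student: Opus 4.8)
The plan is to establish the cycle (i)$\Rightarrow$(ii)$\Rightarrow$(iii)$\Rightarrow$(i), using Theorem \ref{t:Ck-Ascoli-point-count-type} to convert the Ascoli property into local compactness, and Lemmas \ref{l:N_to_omega_1} and \ref{l:comp_res_lind} to detect the Lindel\"of property of $X$ from the two different hypotheses in (ii) and (iii).

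For (i)$\Rightarrow$(ii), I would observe that if $X$ is hemicompact with an increasing cofinal sequence $\{K_n:n\in\w\}$ of compact sets, then $\{[h,K_n,1/m]:n,m\in\w\}$ is a countable base at $h$ for $\tau_k$; hence $\CC(X)$ is first countable (indeed metrizable, being a first-countable topological group), and every first-countable space is a $k$-space. For (iii)$\Rightarrow$(i), assuming $\CC(X)$ is Ascoli and $X$ carries a compact resolution, Theorem \ref{t:Ck-Ascoli-point-count-type} yields that $X$ is locally compact, while Lemma \ref{l:comp_res_lind} yields that the paracompact $X$ is Lindel\"of. A locally compact Lindel\"of space is $\sigma$-compact (cover by compact neighbourhoods and extract a countable subcover), and a locally compact $\sigma$-compact space is hemicompact via the standard exhaustion $L_n\subseteq\Int(L_{n+1})$, $\bigcup_n L_n=X$; this gives (i).

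The substantive implication is (ii)$\Rightarrow$(iii). From $\CC(X)$ being a $k$-space we get, through the chain $k\text{-space}\Rightarrow k_\IR\text{-space}\Rightarrow\text{Ascoli}$ recorded in the introductory diagram, that $\CC(X)$ is Ascoli, which is the first half of (iii). To produce the compact resolution, the key step is to show that $X$ is Lindel\"of, and here I would argue by contradiction: if the paracompact $X$ were not Lindel\"of, then Lemma \ref{l:N_to_omega_1} would embed $\w^{\w_1}$ (with $\w$ discrete) as a \emph{closed} subspace of $\CC(X)$; since closed subspaces of $k$-spaces are $k$-spaces, $\w^{\w_1}$ would then be a $k$-space, contradicting the classical fact that $\w^{\w_1}$ is not a $k$-space. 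Thus $X$ is Lindel\"of. Invoking Theorem \ref{t:Ck-Ascoli-point-count-type} again (Ascoli $\Rightarrow$ locally compact), $X$ is locally compact and Lindel\"of, hence $\sigma$-compact; finally any $\sigma$-compact space admits a compact resolution (index an increasing compact cover $\{C_n:n\in\w\}$ by $K_\alpha:=C_{\alpha(0)}$ for $\alpha\in\w^\w$), which completes (iii).

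The routine ingredients—first countability from hemicompactness, $\sigma$-compactness of locally compact Lindel\"of spaces, and the exhaustion construction—are standard. I expect the main obstacle to be the compact-resolution half of (ii)$\Rightarrow$(iii): namely, recognizing that the failure of Lindel\"ofness of $X$ is \emph{witnessed inside} $\CC(X)$ by the closed copy of $\w^{\w_1}$ furnished by Lemma \ref{l:N_to_omega_1}, and then combining the classical non-$k$-space-ness of $\w^{\w_1}$ with Theorem \ref{t:Ck-Ascoli-point-count-type}. Once that theorem is in hand, the precise role of the two lemmas is to pin down the Lindel\"of property from the $k$-space hypothesis in (ii) and from the compact resolution in (iii) respectively, after which local compactness supplies the rest.
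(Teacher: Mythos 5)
Your proposal is correct and follows essentially the same route as the paper: the same cycle (i)$\Rightarrow$(ii)$\Rightarrow$(iii)$\Rightarrow$(i), with Theorem \ref{t:Ck-Ascoli-point-count-type} supplying local compactness from the Ascoli property, Lemma \ref{l:N_to_omega_1} plus the non-$k$-ness of $\w^{\w_1}$ giving Lindel\"ofness in (ii)$\Rightarrow$(iii), and Lemma \ref{l:comp_res_lind} giving it in (iii)$\Rightarrow$(i). Your only deviations are harmless elaborations of steps the paper leaves implicit (hemicompact $\Rightarrow$ first countable $\Rightarrow$ $k$-space, and passing through $\sigma$-compactness to get the compact resolution rather than through hemicompactness).
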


\begin{proof}
(i)$\Rightarrow$(ii) is clear. (ii)$\Rightarrow$(iii) Assume  that  $\CC(X)$ is a $k$-space.  Then $\CC(X)$  is
 Ascoli.  Hence  $X$ is locally compact by Theorem \ref{t:Ck-Ascoli-point-count-type}.
Moreover $X$ is Lindel\"{o}f. Indeed, if not, then $\CC(X)$ contains as a closed
 subset the product $\w^{\w_{1}}$ by Lemma \ref{l:N_to_omega_1},  a contradiction since
 $\w^{\w_{1}}$ is not a $k$-space. Hence $X$ is Lindel\"{o}f. Consequently $X$ is
hemicompact. Thus $X$ has a compact resolution.   (iii)$\Rightarrow$(i)
Since $\CC(X)$ is Ascoli,  $X$ is locally compact by
 Theorem \ref{t:Ck-Ascoli-point-count-type}.  Now Lemma \ref{l:comp_res_lind} implies
that $X$ is Lindel\"{o}f, so $X$ is hemicompact.
\end{proof}

We need the following lemma.
\begin{lemma} \label{l:Ck-I-k-space}
Let $X$ be a non-discrete locally compact space. Then $C_p(X,\II)$ contains a closed infinite discrete subspace.
\end{lemma}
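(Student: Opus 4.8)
The plan is to exhibit a countably infinite set $\{f_n:n\in\w\}\subseteq C_p(X,\II)$ which is discrete and closed, the guiding idea being that $C_p(X,\II)$ sits inside the compact space $\II^X$, so it suffices to arrange that \emph{every} accumulation point of $\{f_n\}$ in $\II^X$ is \emph{discontinuous} (hence lies outside $C(X,\II)$), making $\{f_n\}$ free of accumulation points in $C_p(X,\II)$. First I would fix a non-isolated point $z\in X$ and, using local compactness, an open $U\ni z$ with $\overline U$ compact; since $z$ is non-isolated, $U$ is infinite, so by the standard fact that every infinite Hausdorff space contains an infinite subspace that is discrete in itself, I may choose a faithfully indexed $A=\{x_n:n\in\w\}\subseteq U$ with each $x_n$ isolated in $A$. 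As $\overline A\subseteq\overline U$ is compact while $A$ is infinite and discrete in itself, $A$ cannot be closed, so I fix $x_*\in\overline A\setminus A$. Every neighbourhood of $x_*$ then meets $A$ in an infinite set (otherwise one could delete the finitely many intersection points and contradict $x_*\in\overline A$), and since $A\setminus T_n$ is finite for $T_n:=\{x_m:m>n\}$, this yields $x_*\in\overline{T_n}$ for every $n$.

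Next I would, for each $n$, separate the finite (hence compact) set $F_n:=\{x_0,\dots,x_n\}$ from the closed set $\overline{T_n}$. They are disjoint: discreteness of $A$ gives $x_j\notin\overline{\{x_m:m\neq j\}}\supseteq\overline{T_n}$ for $j\le n$. Because a locally compact Hausdorff space is Tychonoff, a compact set and a disjoint closed set can be separated by a continuous $[0,1]$-valued function (cover $F_n$ by finitely many completely-regular witnesses vanishing on $\overline{T_n}$, take their maximum, and compose with $t\mapsto\min(2t,1)$). This produces $f_n\in C(X,\II)$ with $f_n\equiv 1$ on $F_n$ and $f_n\equiv 0$ on $\overline{T_n}$; in particular $f_n(x_m)=1$ for $m\le n$, $f_n(x_m)=0$ for $m>n$, and $f_n(x_*)=0$. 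The $f_n$ are pairwise distinct since $f_m(x_n)=0\neq 1=f_n(x_n)$ for $m<n$.

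I would then check that $S:=\{f_n:n\in\w\}$ is discrete and closed in $C_p(X,\II)$. For discreteness, the basic neighbourhood $\{h:h(x_n)>1/2\text{ and }h(x_{n+1})<1/2\}$ isolates $f_n$, because $f_k(x_n)>1/2$ forces $k\ge n$ while $f_k(x_{n+1})<1/2$ forces $k\le n$, so $k=n$. For closedness, suppose $h\in C_p(X,\II)$ were an accumulation point of $S$. Since $S$ is discrete we have $h\notin S$, and then, using that $C_p(X,\II)$ is Hausdorff, every neighbourhood of $h$ must contain infinitely many $f_n$. Testing the neighbourhood determined by $x_m$ and $\eps=1/2$: among the infinitely many $n$ with $|f_n(x_m)-h(x_m)|<1/2$ some satisfies $n\ge m$ (only finitely many indices are $<m$), whence $f_n(x_m)=1$ and $h(x_m)>1/2$; thus $h>1/2$ on all of $A$. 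Testing the neighbourhood determined by $x_*$ and $1/2$ gives some $n$ with $|f_n(x_*)-h(x_*)|<1/2$, and $f_n(x_*)=0$ yields $h(x_*)<1/2$. But $x_*\in\overline A$ and $h$ is continuous with $h>1/2$ on $A$, so $h(x_*)\ge 1/2$, a contradiction. Hence $S$ has no accumulation point in $C_p(X,\II)$, i.e.\ $S$ is an infinite closed discrete subspace.

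The main obstacle is precisely the closedness of $S$: discreteness is a routine finite-coordinate bookkeeping, whereas excluding \emph{continuous} limit points requires the geometric input of the point $x_*$. This is exactly where local compactness is indispensable, since it is what guarantees that the infinite discrete set $A$ actually accumulates at a point of $X$; the resulting clash between the forced value $h>1/2$ on $A$ and $h(x_*)<1/2$ at the limit point $x_*\in\overline A$ is the crux of the argument, everything else being standard Tychonoff separation.
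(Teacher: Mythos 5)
Your proof is correct, but it takes a genuinely different route from the paper's. The shared part is the geometric setup: both arguments use local compactness plus non-discreteness to produce a countably infinite discrete set $A=\{x_n:n\in\w\}$ accumulating at a point $x_*\in X$. From there the paper argues softly: it considers the restriction operator $T:C_p(X,\II)\to C_p(A\cup\{x_*\},\II)$, notes that its image $E$ is dense in the compact \emph{metrizable} cube $\II^{A\cup\{x_*\}}$ yet misses some point $z_*$ (any function discontinuous at $x_*$, e.g.\ the indicator of $A$), takes a sequence $z_n\to z_*$ inside $E$, and pulls it back through $T$; continuity of $T$ then makes the pulled-back family closed and discrete. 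You instead construct the functions explicitly by Tychonoff separation ($f_n\equiv 1$ on $\{x_0,\dots,x_n\}$ and $f_n\equiv 0$ on $\overline{\{x_m:m>n\}}\ni x_*$) and verify closedness by hand: a continuous accumulation point $h$ would have to satisfy $h>1/2$ on all of $A$ but $h(x_*)<1/2$, contradicting continuity at $x_*\in\overline{A}$. Note that the restrictions of your $f_n$ to $A\cup\{x_*\}$ converge pointwise to exactly the paper's $z_*$ (the indicator of $A$), so the two proofs exploit the same phenomenon; what differs is the machinery. Your version is more elementary and self-contained --- no appeal to density of the restriction image, metrizability of the countable cube, or pulling back a convergent sequence along a continuous map --- at the cost of length and some bookkeeping; the paper's version buys a five-line proof from those soft tools. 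Your closing remark identifies the crux accurately: local compactness is what forces the infinite discrete set to accumulate \emph{inside} $X$, and continuity at that accumulation point is what excludes every potential limit function.
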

\begin{proof}
Let $K$ be an infinite compact subset of $X$. Take a countably infinite discrete
subset $D$ of $K$ and let $x_\ast$ be a limit point of $D$. Set $A:=D\cup\{ x_\ast\}$.
 Then the restriction operator $T:C_p(X,\II)\to C_p(A,\II)$ is continuous, and the
image $E$ of $T$ is dense in the compact metrizable space $\II^A$. As $A$ has a
limit point we obtain $E\not= \II^A$, so we can find $z_\ast \in \II^A\setminus E$.
Let now $B=\{z_n\}\subset E$ be such that $z_n\to z_\ast$. Clearly, $B$ is a discrete
and closed infinite subset of $E$.  For every $b\in B$ select $f_b\in T^{-1}(b)$.
 Then $\{f_b:b\in B\}$ is a desired closed infinite
 discrete subspace of $C_p(X,\II)$.
\end{proof}

The next theorem generalizes a result of R.~Pol \cite{Pol-1974}.
\begin{theorem} \label{t:Ck-k-space-point-count-type}
Let $X$ be a paracompact space of  point-countable type.  Then:
\begin{enumerate}
\item[{\rm (i)}] $\CC(X,\II)$ is a $k$-space if and only if $X$ is the topological
sum of a Lindel\"{o}f locally compact space $L$ and a discrete space $D$;
 so $\CC(X,\II)=\CC(L,\II)\times \II^{|D|}$, where $\CC(L,\II)$ is a
 complete metrizable space;
\item[{\rm (ii)}] $\CC(X,\II)$ is a  sequential space if and only if  $\CC(X,\II)$ is a complete metrizable space if and only if $X$ is a  Lindel\"{o}f locally compact space.
\end{enumerate}
\end{theorem}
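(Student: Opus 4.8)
The plan is to prove (i) first and then bootstrap (ii) from it, using that a sequential space is in particular a $k$-space. For the easy direction of (i), if $X=L\oplus D$ with $L$ Lindel\"of locally compact and $D$ discrete, I would write $\CC(X,\II)=\CC(L,\II)\times\II^{|D|}$. Here $L$ is locally compact and Lindel\"of, hence $\sigma$-compact and therefore hemicompact, so $\CC(L,\II)$ is metrizable, in fact complete metrizable as a closed subset of the Fr\'echet space $\CC(L)$. Since a metrizable space is a $k$-space and $\II^{|D|}$ is compact, the product $\CC(L,\II)\times\II^{|D|}$ is a $k$-space by the classical theorem that the product of a $k$-space with a locally compact space is again a $k$-space.

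For the substantive direction of (i), suppose $\CC(X,\II)$ is a $k$-space. Then it is Ascoli, so by Theorem \ref{t:Ck-Ascoli-point-count-type} the space $X$ is locally compact and decomposes as $X=\bigoplus_{i\in\kappa}X_i$ with every $X_i$ Lindel\"of locally compact. The crux is to show that $J:=\{i:X_i \text{ is not discrete}\}$ is countable; then $L:=\bigoplus_{i\in J}X_i$ is Lindel\"of locally compact as a countable sum, $D:=\bigoplus_{i\notin J}X_i$ is discrete, and $X=L\oplus D$, as required. To see that $J$ is countable I argue by contradiction. If $|J|\geq\w_1$, then for each $i\in J$ the non-discrete locally compact space $X_i$ yields, via Lemma \ref{l:Ck-I-k-space}, a countable closed discrete subspace $D_i$ of $C_p(X_i,\II)$; since $\tau_p\subseteq\tau_k$, the set $D_i$ is also closed and discrete, hence a copy of $\w$, in $\CC(X_i,\II)$. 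Fixing the constant function $0$ in the remaining coordinates, $\prod_{i\in J}D_i\cong\w^{|J|}$ embeds as a closed subspace of $\CC(X,\II)=\prod_{i\in\kappa}\CC(X_i,\II)$, and therefore contains a closed copy of $\w^{\w_1}$. As closed subspaces of $k$-spaces are $k$-spaces while $\w^{\w_1}$ is not a $k$-space, this is the desired contradiction. Finally $\CC(X,\II)=\CC(L,\II)\times\II^{|D|}$ with $\CC(L,\II)$ complete metrizable by the first paragraph.

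With (i) in hand, (ii) follows by closing a short cycle. If $X$ is Lindel\"of locally compact, then as above $\CC(X,\II)$ is complete metrizable, and metrizable spaces are Fr\'echet--Urysohn, hence sequential. Conversely, if $\CC(X,\II)$ is sequential it is a $k$-space, so by (i) we may write $\CC(X,\II)=\CC(L,\II)\times\II^{|D|}$ with $L$ Lindel\"of locally compact and $D$ discrete. Fixing a point of $\CC(L,\II)$ exhibits $\II^{|D|}$ as a closed subspace of the sequential space $\CC(X,\II)$, hence $\II^{|D|}$ is sequential; but $\II^{\w_1}$ is not sequential, since its dense $\Sigma$-subspace of points of countable support is sequentially closed yet not closed. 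Therefore $|D|\leq\w$, so $X=L\oplus D$ is itself Lindel\"of and locally compact.

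The main obstacle is the reduction in the second paragraph: converting the global hypothesis that $\CC(X,\II)$ is a $k$-space into the combinatorial conclusion that only countably many summands $X_i$ fail to be discrete. The leverage comes entirely from Lemma \ref{l:Ck-I-k-space}, which turns each non-discrete locally compact factor into a closed discrete sequence in $\CC(X_i,\II)$; assembling these across uncountably many coordinates produces a closed copy of $\w^{\w_1}$, and the failure of the $k$-space property for $\w^{\w_1}$ is precisely what forbids this. The remaining verifications---that a set closed and discrete in $\tau_p$ stays closed and discrete in the finer topology $\tau_k$, that the slice $\prod_{i\in J}D_i$ is genuinely closed in the full product, and that $\CC(L,\II)$ is complete metrizable---are routine.
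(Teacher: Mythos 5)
Your proof is correct and follows essentially the same route as the paper: deduce local compactness and the decomposition $X=\bigoplus_{i}X_i$ into Lindel\"of locally compact summands from the Ascoli property (Theorem \ref{t:Ck-Ascoli-point-count-type}), use Lemma \ref{l:Ck-I-k-space} to plant a closed copy of $\w$ in $\CC(X_i,\II)$ for each non-discrete summand, and invoke the failure of the $k$-property for $\w^{\w_1}$ to conclude that only countably many summands are non-discrete. The only (harmless) deviation is in (ii): where the paper cites the well-known fact that $2^{|D|}$ is sequential iff $D$ is countable, you give a self-contained argument that $\II^{\w_1}$ is not sequential via its dense, sequentially closed $\Sigma$-subspace.
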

\begin{proof}
(i) If $C_k(X,\II)$ is a $k$-space, then $X$ is a locally compact space by Lemmas \ref{l:Ascoli-loc_pseud_comp} and \ref{l:paracom-pseudo-compact}. So $X=\bigoplus_{i\in I} X_i$ is the direct sum of a family $\{ X_i\}_{i\in I}$ of Lindel\"{o}f locally compact spaces by \cite[5.1.27]{Eng}. Denote by $J$ the set of all $i\in I$ for which $X_i$ is not discrete. To prove (i) we have to show that $J$ is countable. Suppose for a contradiction that $J$ is uncountable. Then $C_p(X_i,\II)$ and hence $\CC(X_i,\II)$ contains a closed infinite discrete subspace $D_i$ topologically isomorphic to $\NN$ by Lemma \ref{l:Ck-I-k-space}. So the space
\[
\CC(X,\II)=\prod_{i\in J} \CC(X_i,\II) \times \prod_{i\in I\setminus J} \CC(X_i,\II)
\]
contains $\NN^{|J|}$ as a closed subspace. As $J$ is uncountable we obtain that $\NN^{|J|}$ is not a $k$-space. This contradiction shows that $J$ must be countable. Setting $L:= \bigcup_{i\in J} X_i$ and $D:= \bigcup_{i\in I\setminus J} X_i$ we obtain
the desired decomposition. The converse assertion is trivial.

(ii) If $C(X,\II)$ is a  sequential space, it follows from (i) that $D$ is countable. Indeed, the space $\II^{|D|}$ contains $2^{|D|}$ as a closed subspace and it is well-known that $2^{|D|}$ is sequential (even has countable tightness) if and only if $D$ is countable. So $X$ is a  Lindel\"{o}f locally compact space. If $X$ is Lindel\"{o}f and locally compact space, then $\CC(X)$ and hence its closed subspace $C(X,\II)$ are complete metrizable spaces.
\end{proof}

\bibliographystyle{amsplain}

\end{document}